\newcommand{\Card}{\ensuremath{\text{Card}}}
\DeclareMathOperator{\Col}{Col}
\DeclareMathOperator{\GCH}{GCH}
\DeclareMathOperator{\KH}{KH}
\DeclareMathOperator{\ran}{ran}
\def\MPB{{\mathbb{P}}}
\def\ds{\displaystyle}
\def\sse{\subseteq}
\def\l{\lambda}
\def\lan{\langle}
\def\ran{\rangle}
\def\lora{\longrightarrow}
\newtheorem{theorem}{Theorem}[section]
\newtheorem{lemma}[theorem]{Lemma}
\newtheorem{question}[theorem]{Question}
\newtheorem{definition}[theorem]{Definition}
\newtheorem{remark}[theorem]{Remark}
\newtheorem{claim}[theorem]{Claim}
\numberwithin{equation}{section}
\def\l{\lambda}
\def\ds{\displaystyle}
\def\sse{\subseteq}
\def\lora{\longrightarrow}
\def\lan{\langle}
\def\ran{\rangle}
\def\rmark{\mbox{$\rm\bf\rule{0.06em}{1.45ex}\kern-0.05em R$}}
\def\pmark{\mbox{$\rm\bf\rule{0.06em}{1.45ex}\kern-0.05em P$}}
\def\nmark{\mbox{$\rm\bf\rule{0.06em}{1.45ex}\kern-0.05em N$}}
\def\vdash{\mbox{$\rm\| \kern-0.13em -$}}
\begin{document}

\title[Independence of Higher Kurepa Hypotheses]{Independence of Higher Kurepa Hypotheses }

\author[Sy-David Friedman and M. Golshani]{Sy-David Friedman and Mohammad Golshani $^1$}

\thanks{$^1$This work was done when the second author was at the Kurt G\"odel
Research Center. He would like to thank Prof. Friedman for his
inspiration and encouragement. The authors wish to thank the Austrian
Research Fund (FWF) for its generous support through Project P
21968-N13.} \maketitle

%{ \\ Department of Mathematics\\  Shahid Bahonar University of Kerman, Kerman, Iran}

%\subjclass[2000]{13C13}

%\keywords{ Multiplication mudule, prime submodule, Strongly prime submodule, valuation,
%fractional submodule, pseudo-valuation module}

\begin{abstract}
We study the Generalized Kurepa Hypothesis introduced by Chang. We
show that relative to the existence of an inaccessible cardinal
the Gap-$n$-Kurepa hypothesis does not follow from the
Gap-$m$-Kurepa hypothesis for $m$ different from $n$. The use of
an inaccessible is necessary for this result.

\end{abstract}
\maketitle

\section{Introduction}

In this paper we study the Generalized Kurepa Hypothesis
introduced by Chang (see Chapter VII of [1]). We show that
relative to the existence of an inaccessible cardinal the
Gap$-n-$Kurepa hypothesis does not follow from the Gap$-m-$Kurepa
hypothesis for $m$ different from $n$. The use of an inaccessible
is necessary for this result.

\begin{definition} $(a)$ For infinite cardinals $\l<{\kappa}$, a
$\KH({\kappa},\l)-$family is a family $\mathcal{F}$ of subsets of
${\kappa}$ such that:

$(i)$ $\Card({\mathcal{F}})\geq {\kappa}^+$,

$(ii)$ for all $x\in[{\kappa}]^\l$, $\Card({\mathcal{F}}\upharpoonright  x)\leq\l$,
where ${\mathcal{F}}\upharpoonright  x=\{t\cap x: t\in{\mathcal{F}}\}$.

We say $\KH({\kappa},\l)$ holds if such a family exists.

$(b)$ For infinite cardinals $\l \leq{\kappa}$, a
$\KH({\kappa},<\l)-$family is a family $\mathcal{F}$ of subsets of
${\kappa}$ such that:

$(i)$ $\Card({\mathcal{F}})\geq {\kappa}^+$,

$(ii)$ for all $x\in[{\kappa}]^{< \l}$, $\Card({\mathcal{F}}\upharpoonright x)\leq
\Card(x)+ \aleph_0$.

We say $\KH({\kappa},<\l)$ holds if such a family exists.

$(c)$ Let $n\geq 1$, $n$ finite. By the  Gap$-n-$Kurepa hypothesis we mean the
following statement: for all infinite cardinals ${\lambda}$,
$\KH({\lambda}^{+n},{\lambda})$ holds.
\end{definition}

The following is well-known (see [1], Chapter VII, Theorems 3.2 and 3.3).

\begin{theorem}(Jensen). If $V=L$, then $\KH({\kappa},< \l^+)$ (and hence
$\KH({\kappa},\l)$) holds for all infinite cardinals $\l<{\kappa},
\kappa$ regular.
\end{theorem}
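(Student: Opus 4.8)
The plan is to exploit the two defining features of $L$, namely the Generalized Continuum Hypothesis and Jensen's Condensation Lemma. Since $V=L$ yields $\GCH$, we have $2^{\kappa}=\kappa^{+}$ and every subset of $\kappa$ already appears by stage $\kappa^{+}$, so $\mathcal{P}(\kappa)\subseteq L_{\kappa^{+}}$ and $\Card(\mathcal{P}(\kappa))=\kappa^{+}$. This supplies the raw material: I would take $\mathcal{F}$ to be a canonically defined subcollection of $\mathcal{P}(\kappa)\cap L_{\kappa^{+}}$, cut out by a condition definable over $L_{\kappa^{+}}$ that forces its members to be locally coherent while still leaving $\kappa^{+}$ of them. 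The count $\Card(\mathcal{F})\geq\kappa^{+}$ would come from $\GCH$ together with the observation, via condensation, that "most" subsets of $\kappa$ satisfy the defining condition (exactly as in the classical count of cofinal branches of a Kurepa tree).

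The heart of the matter is clause (ii): for every $x\in[\kappa]^{<\lambda^{+}}$, that is with $\Card(x)\leq\lambda$, one must show $\Card(\mathcal{F}\upharpoonright x)\leq\Card(x)+\aleph_{0}\leq\lambda$. The tool is a transitive collapse. Fix such an $x$ and pick $M\prec L_{\kappa^{+}}$ with $x\cup\{x,\kappa\}\subseteq M$ and $\Card(M)=\lambda$; by Condensation the collapse $\pi\colon M\cong L_{\bar\theta}$ satisfies $\Card(L_{\bar\theta})=\lambda$, and $\pi$ carries $x$ order-isomorphically onto $\bar x=\pi[x]\in L_{\bar\theta}$. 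For any $X\in\mathcal{F}\cap M$ one checks $X\cap x=\pi^{-1}[\pi(X)\cap\bar x]$, so the trace $X\cap x$ is completely determined by the element $\pi(X)\cap\bar x$ of $L_{\bar\theta}$ together with the fixed map $\pi\upharpoonright x$. Since there are at most $\Card(L_{\bar\theta})=\lambda$ possible values of $\pi(X)\cap\bar x$, the members of $\mathcal{F}$ lying in $M$ contribute at most $\lambda$ distinct traces on $x$.

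It remains to see that no $X\in\mathcal{F}\setminus M$ contributes a new trace, i.e. that $X\cap x=X'\cap x$ for some $X'\in\mathcal{F}\cap M$, and this is where the real work lies: the defining condition on $\mathcal{F}$ must be chosen so that the family is closed under the relevant condensation maps, so that reflecting $X$ and $x$ simultaneously into a model of size $\lambda$ and collapsing shows the collapsed trace to be realized by a genuine member of $\mathcal{F}$ inside $M$. When $\kappa=\lambda^{+}$ every $x$ with $\Card(x)\leq\lambda$ is bounded below $\kappa$ by regularity, so traces on $x$ are governed by traces on an initial segment $[0,\beta)$ with $\Card(\beta)\leq\lambda$, and the whole scheme reduces to the classical construction of a Kurepa tree in $L$ with levels of size $\leq\lambda$ carrying $\kappa^{+}$ cofinal branches. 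For general regular $\kappa$ the set $x$ may be cofinal in some $\beta<\kappa$ with $\Card(\beta)>\lambda$, so the reduction to initial segments fails and one must reflect $x$ itself; controlling these "spread-out" sets is the main obstacle, and it is precisely here that fine structure, namely condensation landing inside $L$, rather than bare elementarity, is indispensable. Finally $\KH(\kappa,\lambda)$ is immediate, since $[\kappa]^{\lambda}\subseteq[\kappa]^{<\lambda^{+}}$ and $\Card(x)+\aleph_{0}=\lambda$ whenever $\Card(x)=\lambda$.
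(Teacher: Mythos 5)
Your proposal never actually produces the family $\mathcal{F}$: you describe it as ``a canonically defined subcollection of $\mathcal{P}(\kappa)\cap L_{\kappa^{+}}$ cut out by a condition \ldots{} that forces its members to be locally coherent,'' and at the decisive moment you write that ``the defining condition on $\mathcal{F}$ must be chosen so that the family is closed under the relevant condensation maps'' --- but that choice is the entire content of the theorem, and it is exactly the step you leave open. The collapse computation you do carry out (that members of $\mathcal{F}\cap M$ contribute at most $\lambda$ traces on $x$) is routine; the theorem lives in the complementary claim that members of $\mathcal{F}\setminus M$ contribute nothing new, and no argument for it is given. There is also a quantitative slip: $\KH(\kappa,<\lambda^{+})$ demands $\Card(\mathcal{F}\upharpoonright x)\le\Card(x)+\aleph_{0}$, which for countable $x$ is $\aleph_{0}$ even when $\lambda$ is large, whereas your hull $M$ of size $\lambda$ only yields the weaker bound $\lambda$. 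Finally, the suggestion that ``most'' subsets of $\kappa$ satisfy the (unspecified) condition, so that $\Card(\mathcal{F})\ge\kappa^{+}$ falls out of $\GCH$, is not how the count goes: in the standard proof the lower bound on $\Card(\mathcal{F})$ is the hard half.

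The paper cites [1] for this theorem but reproduces the essential argument in its Lemma 3.2 (in the more general setting $V=L[X]$), and the organization there is the opposite of yours. One \emph{defines} $M_x$ to be the least $M\prec L_{\kappa}$ with $x\cup\{x\}\cup(\lambda+1)\subseteq M$ and sets $\mathcal{F}=\{t\subseteq\kappa:\forall x\in[\kappa]^{\lambda},\ t\cap x\in M_x\}$. With this definition clause (ii) is immediate, since $\mathcal{F}\upharpoonright x\subseteq M_x$ and $\Card(M_x)=\lambda$; all the work goes into clause (i), proving $\Card(\mathcal{F})\ge\kappa^{+}$. That is done by contradiction: enumerate $\mathcal{F}=\langle t_{\nu}:\nu<\kappa\rangle$ definably over $L_{\kappa^{+}}$, build a continuous chain of hulls $N_{\nu}\prec L_{\kappa^{+}}$ whose collapses land on levels $L_{\beta_{\nu}}$, diagonalize via $t=\{\beta_{\nu}:\beta_{\nu}\notin t_{\nu}\}$, and then use a direct-limit and condensation analysis to show that any purported witness $x$ to $t\notin\mathcal{F}$ in fact satisfies $t\cap x\in M_x$. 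Your sketch correctly names the tools ($\GCH$ plus condensation) but inverts where the difficulty sits and omits both the construction of $\mathcal{F}$ and the diagonalization that make the proof work; as it stands it is a plan rather than a proof.
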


In this paper we prove the following  theorem.

\begin{theorem} Let $n\geq 1$. The following are
equiconsistent:

$(a)$ There exists an inaccessible cardinal,

$(b)$ $\GCH+$ the Gap$-m-$Kurepa hypothesis holds for all $m\neq
n$, but the Gap$-n-$Kurepa hypothesis fails.
\end{theorem}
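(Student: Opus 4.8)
The statement is an equiconsistency, so the plan is to prove the two implications $\mathrm{Con}(a)\Rightarrow\mathrm{Con}(b)$ and $\mathrm{Con}(b)\Rightarrow\mathrm{Con}(a)$ separately. The first is a forcing construction over a model with an inaccessible; the second, the assertion that an inaccessible is \emph{necessary}, is a core-model/covering argument. I would organize the write-up around these two halves, treating the lower bound first since it pins down which cardinal must be made inaccessible in the upper bound.

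For the lower bound $\mathrm{Con}(b)\Rightarrow\mathrm{Con}(a)$, I would show that the mere failure of the Gap-$n$-Kurepa hypothesis already carries the consistency strength of an inaccessible. Fix a witness $\l$ with $\KH(\l^{+n},\l)$ false, and let $\mu:=\l^{+(n+1)}$; the plan is to show $\mu$ is inaccessible in the core model $K$ (equivalently in $L$ if no sharps exist). Suppose not. By the covering lemma, $K$-successors agree with $V$-successors on a tail, so $(\l^{+k})^V=(\l^{+k})^K$ for $k\le n+1$ and $\mu=((\l^{+n})^K)^{+K}$ is a successor in $K$. Jensen's theorem (Theorem 1.2) applied inside $K$ yields $\mathcal F\in K$, a family of subsets of $(\l^{+n})^K=(\l^{+n})^V$ with $\Card(\mathcal F)\ge\mu$ and the strong trace bound $\Card(\mathcal F\upharpoonright z)\le\Card(z)+\aleph_0$ for all $z\in[\l^{+n}]^{<\l^+}$. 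Covering each $x\in[\l^{+n}]^{\l}$ by a $K$-set $z$ with $\Card(z)=\l$ (the case $\l=\aleph_0$ being the classical Silver–Solovay argument), the $V$-trace $\mathcal F\upharpoonright x$ is a projection of $\mathcal F\upharpoonright z$ and so has size $\le\l$. Thus $\mathcal F$ is a $\KH(\l^{+n},\l)$-family in $V$, contradicting the choice of $\l$. Hence $\mu$ is inaccessible in $K$, giving $\mathrm{Con}(a)$.

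For the upper bound $\mathrm{Con}(a)\Rightarrow\mathrm{Con}(b)$, start in $L$ with an inaccessible $\k$, so that $\GCH$ and (by Jensen) every instance of $\KH$ hold. Fix the intended witness $\l=\aleph_0$ and force to make $\k=\l^{+(n+1)}=\aleph_{n+1}$, so that any would-be $\KH(\aleph_n,\aleph_0)$-family is forced to have size $\ge\k$. The guiding idea is Silver's: a candidate family $\mathcal F$ on $\aleph_n$ with countable traces produces, along any continuous increasing chain of countable sets with union $y$, a genuine trace-tree $\bigcup_\xi\mathcal F\upharpoonright x_\xi$ of height $\aleph_1$ with countable levels; this tree has size $<\k$ and so is captured at a bounded stage, whereupon a branch lemma for a sufficiently closed tail keeps $\Card(\mathcal F\upharpoonright y)\le\aleph_1$. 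Iterating the estimate up through $\aleph_1,\dots,\aleph_n$ (using $\GCH$ in the bounded extension at each step) bounds $\Card(\mathcal F)$ by $\aleph_n<\k$, so $\KH(\aleph_n,\aleph_0)$ fails. For $n=1$ this is exactly Silver's $\Col(\aleph_1,{<}\k)$ argument and nothing further is needed. The remaining task is to secure $\GCH$ together with $\KH(\mu^{+m},\mu)$ for every $\mu$ and every $m\ne n$: away from $\aleph_n$ these survive by closure (below) and, at and above $\k=\aleph_{n+1}$ (now a successor), by running Jensen's construction in the extension, which is legitimate since a set-forcing extension of $L$ retains the square and diamond principles his proof uses.

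The crux, and where I expect the real work to lie, is the clash \emph{at the single cardinal} $\aleph_n$ between destroying $\KH(\aleph_n,\aleph_0)$ and preserving $\KH(\aleph_n,\aleph_j)$ for $1\le j\le n-1$ (these are the instances of Gap-$(n-j)$ at $\aleph_j$, all required when $n\ge 2$). The branch-counting that kills the $\aleph_0$-trace families climbs through the step $\aleph_1\to\aleph_2$ using $({<}\aleph_2)$-closure of the tail; but that very closure also kills every $\aleph_1$-trace family of size $\k$, and more generally the naive collapse $\Col(\aleph_n,{<}\k)$ collapses \emph{all} Kurepa families on $\aleph_n$ regardless of trace parameter. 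So a single closed collapse cannot work for $n\ge 2$, and the heart of the matter is to design a forcing whose closure and chain condition are calibrated so that the branch-counting bites only on the $\aleph_0$-trace trees, while the families witnessing the larger-trace instances are added generically so that their higher trace-trees appear only \emph{cofinally}, never captured at a bounded stage and hence never reached by the branch lemma. Building this forcing and verifying simultaneously that it preserves $\GCH$, realizes $\KH(\aleph_n,\aleph_j)$ for $1\le j\le n-1$, leaves no $\aleph_0$-trace family of size $\k$, and installs the remaining Gap-$m$ families uniformly at every cardinal, is the main obstacle and the technical core of the proof.
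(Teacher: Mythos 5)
Your decomposition into the two consistency implications matches the paper, and you have correctly located the crux of the forward direction: for $n\ge 2$ a plain collapse $\Col(\aleph_n,{<}\kappa)$ would destroy every Kurepa family on $\aleph_n$, including the $\KH(\aleph_n,\aleph_i)$-families ($0<i<n$) that clause $(b)$ requires, so one must add those families generically by a forcing calibrated to survive the Silver branch-counting that kills $\KH(\aleph_n,\aleph_0)$. But you stop exactly there: you state that building such a forcing ``is the main obstacle and the technical core of the proof'' without building it. That construction is the main content of the paper. Concretely, the paper forces with $\mathbb{R}=\Col(\omega_n,{<}\kappa)\times\prod_{0<i<n}\mathbb{Q}_i$ over $L$, where a condition in $\mathbb{Q}_i$ is a triple $(X_p,\mathcal{F}_p,g_p)$ approximating a $\KH(\aleph_n,\aleph_i)$-family of size $\kappa$, subject to a closure requirement on $\mathcal{F}_p$ governed by fixed injections $J_i:[\omega_n]^{\le\omega_i}\to\omega_n$. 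The point of that bookkeeping is precisely the calibration you ask for: $\mathbb{Q}_i$ is $\omega_{i+1}$-closed modulo $J_i$ and $\omega_{i+2}$-c.c., the generic family has traces of size $\le\aleph_i$ on ground-model sets (and hence, by a covering argument, on all sets), and the quotient $(\mathbb{R}:\mathbb{R}_\lambda)$ past a bounded stage $\lambda$ capturing the trace function $X\mapsto\mathcal{F}\restriction X$ is still countably closed modulo the $J_i$'s, so the perfect-tree splitting argument produces $2^{\aleph_0}$ distinct traces of any new $b\in\mathcal{F}$ on a single countable set, refuting $\KH(\aleph_n,\aleph_0)$ only. Without this (or an equivalent) construction and the verification of its closure, chain condition, and projection properties, the forward direction for $n\ge 2$ is not proved.

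Your lower bound also diverges from the paper and has a real soft spot. The paper does not use the core model or covering: it shows directly (Lemmas 3.1--3.2) that if $\KH(\kappa,\lambda)$ fails with $\kappa$ regular and $\kappa^\lambda=\kappa$, then $\kappa^+$ is inaccessible in $L$, by coding all $\lambda$-subsets of $\kappa$ and the relevant collapses into a single $X\subseteq\kappa$ and running a Jensen-style condensation construction inside $L[X]$ to manufacture a $\KH(\kappa,\lambda)$-family that remains one in $V$. Your covering route does work for uncountable $\lambda$ (a $K$-cover of $x$ has $V$-cardinality $\lambda$, so the $K$-trace bound transfers), but it breaks at $\lambda=\aleph_0$, where covering only yields covers of size $\aleph_1$ and hence trace bounds of $\aleph_1$ rather than $\aleph_0$; and $\lambda=\aleph_0$ is exactly the instance witnessed in the intended models of $(b)$ (Remark 1.5), so it cannot be avoided. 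You defer that case to ``the classical Silver--Solovay argument,'' which is essentially the condensation argument the paper writes out in full; as a citation this is defensible, but as a proof it leaves the decisive case unargued.
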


\begin{remark} Our proof shows that if $\l<{\kappa}$ are
infinite cardinals, $\kappa$ regular and $\KH(\kappa, \l)$ fails,
then $\kappa^+$ is inaccessible in $L$ (see Lemma 3.1).
\end{remark}

\begin{remark} $(b)$ of the above Theorem can be strengthened to
the Gap$-m-$Kurepa hypothesis holds for all $m\neq n$, but
$\KH({\aleph}_n,{\aleph}_0)$ fails (see Lemma 2.7).
\end{remark}

\section{Proof of Con$(a)$ implies Con$(b)$}

 In this section we show that
if there exists an inaccessible cardinal, then in a forcing
extension of $L$, the
 Gap$-m-$Kurepa hypothesis holds for all $m\neq n$, but the  Gap$-n-$Kurepa hypothesis fails, where $n\geq 1$ is a fixed natural number.

 From now on assume that $V=L$, and let ${\kappa}$ be an inaccessible cardinal. We consider two cases.

 {\bf Case 1.} $n=1$.

 Let $\mathbb{P}=\Col(\omega_1,<{\kappa})$ be the Levy collapse with countable conditions
 which converts ${\kappa}$ into $\omega_2$, and let $G$ be $\mathbb{P}$-generic over $L$.

\begin{lemma} The following hold in $L[G]:$

$(a)$ $\KH(\aleph_1,\aleph_0)$ fails,

$(b)$ The Gap$-m-$Kurepa hypothesis holds for all $m\geq 2$.
\end{lemma}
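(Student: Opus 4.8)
My plan is to prove (a) by Silver's argument for destroying Kurepa trees, and (b) by preserving to $L[G]$ the Kurepa families that Theorem 1.2 supplies in $L$, splitting into cases according to the size of $\lambda$. Throughout I use that $\mathbb{P}=\Col(\omega_1,<\kappa)$ is countably closed and, since $\kappa$ is inaccessible, $\kappa$-c.c.; hence it preserves $\omega_1$ and all cardinals $\ge\kappa$, collapses every cardinal in $(\omega_1,\kappa)$ to $\omega_1$, and makes $\kappa=\aleph_2^{L[G]}$. I also use the standard correspondence between a $\KH(\aleph_1,\aleph_0)$-family $\mathcal{F}$ and a Kurepa tree, via $T=\{\chi_t\upharpoonright\alpha : t\in\mathcal{F},\ \alpha<\omega_1\}$ ordered by extension, whose levels are countable by clause (ii) and which has $\ge\aleph_2$ cofinal branches.

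For (a), suppose toward a contradiction that $L[G]$ has a Kurepa tree $T$, w.l.o.g.\ a tree on $\omega_1$, hence (coded by) a subset of $\omega_1$. By $\kappa$-c.c.\ a nice name for $T$ uses, for each $\xi<\omega_1$, an antichain of size $<\kappa$, and since $\cf(\kappa)=\kappa>\omega_1$ the union of the countable supports involved is bounded by some $\alpha<\kappa$; thus $T\in L[G_\alpha]$, where $G_\alpha=G\cap\Col(\omega_1,<\alpha)$. Writing $\mathbb{P}\cong\Col(\omega_1,<\alpha)\times\Col(\omega_1,[\alpha,\kappa))$, we have $L[G]=L[G_\alpha][H]$ with $H$ generic for the tail $\mathbb{Q}=\Col(\omega_1,[\alpha,\kappa))$, which is countably closed. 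The key point is that a countably closed forcing adds no new cofinal branch to an $\omega_1$-tree with countable levels: splitting a putative name for a new branch along a binary tree of conditions and taking lower bounds (available by closure) produces $2^{\aleph_0}$ distinct nodes on a single countable level, a contradiction. Hence every branch of $T$ in $L[G]$ already lies in $L[G_\alpha]$, where there are at most $2^{\aleph_1}=\aleph_2^{L[G_\alpha]}=:\mu<\kappa$ of them; but $\mu$ is collapsed to $\omega_1$ by $\mathbb{Q}$, so $T$ has at most $\aleph_1$ branches in $L[G]$, contradicting Kurepa-ness. Thus $\KH(\aleph_1,\aleph_0)$ fails.

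For (b), fix $m\ge 2$ and an infinite cardinal $\lambda$ of $L[G]$; I preserve a suitable $L$-family $\mathcal{F}$, noting that $\mathcal{F}\upharpoonright x$ is the image of $\mathcal{F}\upharpoonright y$ under $s\mapsto s\cap x$ whenever $x\subseteq y$, so $\lvert\mathcal{F}\upharpoonright x\rvert\le\lvert\mathcal{F}\upharpoonright y\rvert$. If $\lambda\ge\kappa$, then $\lambda$ is an $L$-cardinal and $\lambda^{+m}$ agrees in $L$ and $L[G]$; take $\mathcal{F}\in L$ a $\KH(\lambda^{+m},\lambda)$-family (Theorem 1.2). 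Its cardinality $\ge(\lambda^{+m})^+$ is preserved, and any $x\in[\lambda^{+m}]^{\lambda}$ of $L[G]$ is covered, by $\kappa$-c.c.\ (as $\lambda\ge\kappa$), by some $y\in L$ with $\lvert y\rvert^L=\lambda$, whence $\lvert\mathcal{F}\upharpoonright x\rvert\le\lvert\mathcal{F}\upharpoonright y\rvert\le\lambda$. If $\lambda=\aleph_0$, then $\aleph_m=\kappa^{+(m-2)}\ge\kappa$; take $\mathcal{F}\in L$ a $\KH(\aleph_m,\aleph_0)$-family. Since $\mathbb{P}$ is countably closed it adds no new countable sets, so every countable $x$ of $L[G]$ lies in $L$ and inherits $\lvert\mathcal{F}\upharpoonright x\rvert\le\aleph_0$, while $\lvert\mathcal{F}\rvert$ is again preserved.

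The remaining, and main, case is $\lambda=\aleph_1$, where the target $\mu:=\aleph_{1+m}=\kappa^{+(m-1)}>\kappa$ and a new $x\in[\mu]^{\aleph_1}$ of $L[G]$ is neither covered by a $\lambda$-sized $L$-set (there is no $\kappa$-c.c.\ covering below $\kappa$) nor a ground-model set (closure handles only countable sets). Here I take $\mathcal{F}\in L$ to be a \emph{full} Kurepa family, i.e.\ a $\KH(\mu,<\kappa)$-family: Jensen's construction in $L$ yields, for regular $\mu$, a single family with $\lvert\mathcal{F}\upharpoonright z\rvert\le\lvert z\rvert+\aleph_0$ for all $z\in[\mu]^{<\mu}$, and since $\kappa<\mu$ this gives the $<\kappa$ bound simultaneously for all small $z$. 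Given $x=\{x_\xi:\xi<\omega_1\}\in[\mu]^{\aleph_1}$ in $L[G]$, $\kappa$-c.c.\ provides for each $\xi$ an $L$-set $A_\xi\ni x_\xi$ with $\lvert A_\xi\rvert^L<\kappa$; as $\cf(\kappa)=\kappa>\omega_1$, the set $y=\bigcup_{\xi<\omega_1}A_\xi\in L$ satisfies $\lvert y\rvert^L<\kappa$ and covers $x$. Then $\lvert\mathcal{F}\upharpoonright y\rvert^L\le\lvert y\rvert^L+\aleph_0<\kappa$, so $\mathcal{F}\upharpoonright x$ has size $<\kappa$ in $L$ and hence size $\le\aleph_1$ in $L[G]$; with $\lvert\mathcal{F}\rvert\ge\mu^+$ preserved, $\mathcal{F}$ witnesses $\KH(\mu,\aleph_1)$. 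I expect the delicate input to be exactly this step: securing a \emph{single} $L$-family valid for all subsets of size $<\kappa$ at once (the strong form of Theorem 1.2), combined with the regularity of $\kappa$ that keeps an $\omega_1$-indexed union of small $L$-sets below $\kappa$.
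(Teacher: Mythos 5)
Your proof is correct and follows essentially the same route as the paper: part (a) is Silver's argument (which the paper simply cites), and part (b) preserves Jensen's $L$-families using the covering of infinite sets of $L[G]$ by $L$-sets of small cardinality, obtained from the $\kappa$-c.c.\ and countable closure of $\Col(\omega_1,<\kappa)$. Your only substantive refinement is in the $\lambda=\aleph_1$ case, where you correctly observe that one must start from a family controlling restrictions to \emph{all} sets of $L$-size $<\kappa$ (e.g.\ a $\KH(\mu,<\kappa^+)$-family, available from Theorem 1.2 with $\lambda=\kappa$) --- a point the paper glosses over by writing only ``$\KH(\mu,\lambda)$-family'' while then using the bound $\Card(\mathcal{F}\upharpoonright y)<\kappa$ for covers $y$ of $L$-cardinality $<\kappa$.
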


\begin{proof} $(a)$ is a well known result of Silver (see [7], or [2] Lemma 20.4).

$(b)$ Let $m\geq 2$, and let $\l$ be an infinite cardinal in
$L[G]$. Let $\mu=(\l^{+m})^{L[G]}$. By Theorem 1.2, there is a
$\KH(\mu, \l)$ family $\mathcal{F}$ in $L$. We show that it remains
a $\KH(\mu, \l)$ family in $L[G]$. Clearly $\Card(\mathcal{F})=
\mu^{+L}=(\l^{+m+1})^{L[G]}$. Suppose $x\in([{\mu}]^\l)^{L[G]}$.

Note that $\MPB$ is $\kappa-c.c.$ and $\omega_1-$closed,  and in $L[G], \kappa$ becomes $\omega_2.$ Thus
it is easily seen that infinite sets in $L[G]$ are covered by sets of the same cardinality which belong to the ground
model $L,$ in particular
there is a set $y \subseteq \mu$ in $L$ such that $x
\subseteq y$ and $x$ and $y$ have the same cardinality in $L[G]$.
If $\l \neq \aleph_1,$ then $y$ has $L-$cardinality $\l,$ hence in
$L, \Card(\mathcal{F}\upharpoonright y) \leq \l.$ It follows that in $L[G],
\Card(\mathcal{F}\upharpoonright x) \leq \Card(\mathcal{F}\upharpoonright y) \leq \l.$ If $\l =
\aleph_1,$ then $y$ has $L-$cardinality less than $\kappa,$ hence
in $L, \Card(\mathcal{F}\upharpoonright y) < \kappa.$ It follows that in $L[G],
\Card(\mathcal{F}\upharpoonright y) \leq \aleph_1,$ and hence in $L[G],
\Card(\mathcal{F}\upharpoonright x) \leq \Card(\mathcal{F}\upharpoonright y) \leq
\aleph_1=\l.$
\end{proof}

{\bf Case 2.} $n\geq 2$.

For each $i$, $0< i<n$, fix an injection $J_i:[\omega_n]^{\leq
\omega_i}\lora \omega_n$. Let
$\mathbb{R}=\mathbb{P}\times\ds\prod_{0<i<n}\mathbb{Q}_i$, where
the forcing notions $\mathbb{P}$ and $\mathbb{Q}_i$, $0<i<n$, are
defined as follows.

 $\mathbb{P}=\Col(\omega_n,<{\kappa})$ is the
Levy collapse with conditions of size $<\omega_n$ which converts
${\kappa}$ into $\omega_{n+1}$.

 $\mathbb{Q}_i$, $0<i<n$, is the set of triples
 $p=(X_p,\mathcal{F}_p,g_p)$ such that:

$(i-1)$ $X_p$ is a subset of $\omega_n$ of size $\leq\omega_i$,

$(i-2)$ $\mathcal{F}_p$ is a subset of $^{X_p} 2$ of size
$\leq\omega_i$,

$(i-3)$ $g_p$  is a $1-1$ function from a subset of ${\kappa}$ into
$\mathcal{F}_p$,

$(i-4)$ $\mathcal{F}_p$ is $\omega_i-$closed in the following sense:
If $t \in$ $^{X_p} 2$ and $\langle X_\xi:\xi<\omega_{i-1}\rangle$
is a sequence of subsets of $X_p$ such that for all
$\xi<\omega_{i-1}$, $J_i(X_\xi)\in X_p$ and $t\upharpoonright X_\xi\in
\mathcal{F}_p\upharpoonright X_\xi$, then there is $s\in \mathcal{F}_p$ such
that $s\upharpoonright X=t\upharpoonright X$ and $s\upharpoonright (X_p\setminus X)=0\upharpoonright (X_p\setminus X)$ (=the zero function
on $X_p\setminus X$), where $X=\ds\bigcup_{\xi<\omega_{i-1}}X_\xi$.

For $p,q\in\mathbb{Q}_i$, let $p\leq q$ ($p$ is an extension of
$q$) iff:

$(i-5)$ $X_p\supseteq X_q$,

$(i-6)$ $\mathcal{F}_q= \mathcal{F}_p\upharpoonright X_q$,

$(i-7)$ $dom(g_p)\supseteq dom(g_q),$

$(i-8)$ for all $\alpha\in dom(g_q)$, $g_q(\alpha)=g_p(\alpha)\upharpoonright
X_q$.

We show that in the generic extension by $\mathbb{R}$, the
Gap$-m-$Kurepa hypothesis holds for all $m\neq n$, but the
Gap$-n-$Kurepa hypothesis fails.

\begin{lemma} $(a)$ $\mathbb{P}$ is $\omega_n$-closed,

$(b)$ $\mathbb{P}$ satisfies the ${\kappa}$-c.c.,

$(c)$ Let $0<i<n$. Then $\mathbb{Q}_i$ is $\omega_{i+1}-$closed
modulo $J_i$ in the following sense: If $\langle p_\xi:
\xi<\lambda\rangle$, $\lambda\leq \omega_i$, is a descending
sequence of conditions in $\mathbb{Q}  _i$ such that for all
$\xi<\lambda$, $J_i(X_{p_\xi})\in X_{p_{\xi+1}}$, then there is a
condition $p\in\mathbb{Q}_i$ which extends all of the $p_\xi$'s,
$\xi<\lambda$. Furthermore if $\l<\omega_i$, then $p$ can be
chosen to be the greatest lower bound of the $p_\xi$'s,
$\xi<\lambda$.

$(d)$ Let $0<i<n$. Then $\mathbb{Q}_i$ has the $\omega_{i+2}-$c.c.
\end{lemma}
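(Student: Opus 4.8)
The plan is to dispose of the two standard parts (a), (b) quickly and to concentrate on the bookkeeping behind (c) and (d). For (a), given a descending sequence $\langle p_\xi:\xi<\lambda\rangle$ in $\mathbb{P}$ with $\lambda<\omega_n$, I would take $p=\bigcup_{\xi<\lambda}p_\xi$; since each $p_\xi$ has size $<\omega_n$ and $\omega_n$ is regular, $p$ is again a function of size $<\omega_n$, hence a condition, and it is the greatest lower bound. For (b), I would run the usual $\Delta$-system argument: given $\kappa$ many conditions, their domains have size $<\omega_n<\kappa$, so by the $\Delta$-system lemma (available because $\kappa$ is inaccessible, hence regular and a strong limit) I thin to $\kappa$ many with domains forming a $\Delta$-system with root $r$; as there are $<\kappa$ possibilities for the restriction to the fixed small set $r$, I thin again to $\kappa$ many agreeing on $r$, and any two of these are compatible as partial functions. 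Hence $\mathbb{P}$ has the $\kappa$-c.c.

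For (c), fix a descending sequence $\langle p_\xi:\xi<\lambda\rangle$, $\lambda\le\omega_i$, with $J_i(X_{p_\xi})\in X_{p_{\xi+1}}$. I set $X_p=\bigcup_{\xi<\lambda}X_{p_\xi}$ (of size $\le\omega_i$) and, for each $\alpha\in\bigcup_\xi\dom(g_{p_\xi})$, let $g_p(\alpha)$ be the coherent union $\bigcup_\xi g_{p_\xi}(\alpha)$, which by (i-8) is a well-defined function on $X_p$. Call $s\in{}^{X_p}2$ a \emph{thread} if $s\restriction X_{p_\xi}\in\mathcal{F}_{p_\xi}$ for all $\xi$. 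The work is to produce $\mathcal{F}_p\subseteq{}^{X_p}2$ that is $\omega_i$-closed, satisfies $\mathcal{F}_p\restriction X_{p_\xi}=\mathcal{F}_{p_\xi}$ for every $\xi$ (so that every element is a thread), contains each $g_p(\alpha)$, and has size $\le\omega_i$. The decisive counting observation is that, since $J_i$ is injective, there are at most $\Card(X_p)\le\omega_i$ subsets $Y\subseteq X_p$ with $J_i(Y)\in X_p$; consequently the number of $\omega_{i-1}$-indexed covers $\langle Y_\zeta\rangle$ relevant to the closure clause (i-4) is at most $\omega_i^{\omega_{i-1}}=\omega_i$ under the GCH, and each application of (i-4) adds at most $\omega_i$ functions. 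Thus I can build $\mathcal{F}_p$ as the closure, under the gluing operation of (i-4), of a seed consisting of the $g_p(\alpha)$ together with one chosen thread above each element of each $\mathcal{F}_{p_\xi}$, iterating $\omega_i$ times and keeping the size $\le\omega_i$ throughout.

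The heart of the matter, and the step I expect to be the main obstacle, is checking that this closure never leaves the class of threads, i.e.\ that every function it produces still restricts into each $\mathcal{F}_{p_\xi}$, so that $\mathcal{F}_p\restriction X_{p_\xi}=\mathcal{F}_{p_\xi}$ holds exactly. Surjectivity onto each level is the easier half: given $f\in\mathcal{F}_{p_\xi}$ I extend it upward level by level, using at successor steps that the restriction $\mathcal{F}_{p_{\eta+1}}\to\mathcal{F}_{p_\eta}$ is onto, and at a limit stage $\delta<\lambda$ invoking the $\omega_i$-closure of $\mathcal{F}_{p_\delta}$; here the hypothesis $J_i(X_{p_\zeta})\in X_{p_{\zeta+1}}$ is exactly what makes the sets $\langle X_{p_\zeta}:\zeta<\delta\rangle$ admissible covers in (i-4) for $\mathcal{F}_{p_\delta}$ (after passing to a cofinal subsequence of length $\le\omega_{i-1}$, legitimate since $\delta<\omega_i$ forces $\cf(\delta)\le\omega_{i-1}$), producing the required element of $\mathcal{F}_{p_\delta}$ above the thread built so far. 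The genuinely delicate point is the converse: each glued function $s$ manufactured by (i-4) at the top must project, at every level $\eta$, into $\mathcal{F}_{p_\eta}$, and this is where the coherence $\mathcal{F}_{p_\eta}=\mathcal{F}_{p_\delta}\restriction X_{p_\eta}$ of the system and the $J_i$-coding of the pieces must be combined so that the gluing performed above is witnessed by a legitimate gluing inside each $\mathcal{F}_{p_\eta}$. Once this invariant is in hand, taking $\mathcal{F}_p$ to be the set of all threads gives the greatest lower bound when $\lambda<\omega_i$ (the thread count is then $\le\omega_i^{\lambda}=\omega_i$ by the GCH), while for $\lambda=\omega_i$ the selective construction above still yields some lower bound of size $\le\omega_i$, which is all that is claimed.

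For (d), I would again use a $\Delta$-system argument, now at $\omega_{i+2}$. Given $\omega_{i+2}$ conditions, their first coordinates have size $\le\omega_i<\omega_{i+1}$, so since $\omega_{i+1}^{\omega_i}=\omega_{i+1}<\omega_{i+2}$ under the GCH the $\Delta$-system lemma yields $\omega_{i+2}$ of them whose $X_p$ form a $\Delta$-system with root $R$ of size $\le\omega_i$. The number of possible types over $R$, namely the data $\mathcal{F}_p\restriction R$ together with the pattern of $g_p$ on $R$, is at most $(2^{\omega_i})^{\omega_i}=\omega_{i+1}<\omega_{i+2}$ by the GCH, so I thin once more to $\omega_{i+2}$ conditions all of the same type over $R$. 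Any two such conditions $p,q$ are then compatible: I amalgamate them over $R$ by setting $X_r=X_p\cup X_q$ and building, exactly as in (c), an $\omega_i$-closed family $\mathcal{F}_r$ of size $\le\omega_i$ whose restrictions to $X_p$ and $X_q$ recover $\mathcal{F}_p$ and $\mathcal{F}_q$ (possible because the two families agree on $R$), and taking $g_r=g_p\cup g_q$; matching of types on $R$ guarantees that $g_p$ and $g_q$ cohere on the common part of their domains, and injectivity of $g_r$ is arranged using the injectivity of $g_p,g_q$ and the freedom in the amalgamation. This produces a common extension, so $\mathbb{Q}_i$ has the $\omega_{i+2}$-c.c.
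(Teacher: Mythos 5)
Parts (a), (b) and (c) of your proposal follow essentially the same route as the paper: (a) and (b) are the standard Levy facts, and for (c) you take the union of the $X_{p_\xi}$'s and the coherent union of the $g_{p_\xi}$'s and then generate $\mathcal{F}_p$ by closing a suitable seed under the gluing operation of $(i$-$4)$, using the injectivity of $J_i$ to bound the number of relevant closure instances by $\omega_i$; this matches the paper's two cases ($\lambda<\omega_i$: greatest lower bound via all ``threads''; $\lambda=\omega_i$: a selective seed of size $\omega_i$), and the delicate verification you flag --- that the closure never produces a function whose restriction to some $X_{p_\xi}$ leaves $\mathcal{F}_{p_\xi}$ --- is exactly the point the paper also leaves to the reader. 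So far, fine.

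Part (d) is where you genuinely diverge from the paper, and where there is a gap. You thin to a $\Delta$-system with root $R$ and to conditions of the same type over $R$, and then amalgamate two such conditions $p,q$ directly over $R$. But agreement on $R=X_p\cap X_q$ is not enough to amalgamate, because the closure clause $(i$-$4)$ is driven by the $J_i$-codes: an element $J_i(Y)\in X_p\setminus R$ may code a set $Y$ that meets $X_q\setminus R$ (and symmetrically), and in the amalgam $X_r=X_p\cup X_q$ such a code triggers a closure instance producing a function of the form $t\upharpoonright Y\cup 0\upharpoonright(X_r\setminus Y)$ whose restriction to $X_q$ is $t\upharpoonright(Y\cap X_q)\cup 0\upharpoonright(X_q\setminus Y)$; there is no reason for this to lie in $\mathcal{F}_q$, since $J_i(Y\cap X_q)$ need not belong to $X_q$ and $\mathcal{F}_q$ is only closed under gluings along sets coded \emph{inside} $X_q$. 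Thus the exact equalities $\mathcal{F}_r\upharpoonright X_p=\mathcal{F}_p$ and $\mathcal{F}_r\upharpoonright X_q=\mathcal{F}_q$ demanded by $(i$-$6)$ can fail, and your amalgam need not extend $p$ and $q$. This is precisely why the paper does not amalgamate two antichain elements directly: it takes an elementary submodel $M\prec H(\theta)$ of size $\omega_{i+1}$ \emph{closed under $\omega_i$-sequences} with $\mathbb{Q}_i,X,D,A\in M$, picks $q\in A\setminus M$, forms the full trace $q\upharpoonright M$ (not just the restriction to the root), and extends it inside $M$ to a condition $p$ lying below some $r\in A$. The closure of $M$ under $\omega_i$-sequences gives $X_p\subseteq M$ and, since $J_i$ is injective, any $Y$ with $J_i(Y)\in X_p$ satisfies $Y\in M$ and hence $Y\subseteq M$, so $Y$ meets $X_q$ only inside $X_q\cap M\subseteq X_p$; this separation of the codes, together with the fact that $p$ and $q$ both extend the common condition $q\upharpoonright M$, is what makes the amalgamation (with the fillers $s_0,t_0$) go through. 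Your type-counting over the root does not provide this control, so you need to replace the direct amalgamation by the submodel argument (or supply some other mechanism that prevents codes in $X_p$ from referring to points of $X_q\setminus X_p$).
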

\begin{proof} $(a)$ and $(b)$ are well known results of Levy (see
[2], Lemma 20.4 ). We prove $(c)$ and $(d)$.

$(c)$ Fix  $0<i<n$, and let $\langle p_\xi: \xi<\lambda\rangle$ be
as above. To simplify the notation let
$p_\xi=(X_\xi,\mathcal{F}_\xi,g_\xi)$, $\xi<\lambda$. We consider
two cases.

{\bf Case 1.} $\lambda<\omega_i$.

Let $p=(X,\mathcal{F},g)$, where:

\begin{itemize}
 \item $X=\ds\bigcup_{\xi<\lambda}X_\xi$,
\item $\mathcal{F}$ is the least subset of $^{X} 2$ such that if
$t\in$$^{X}2$ and for all $\xi<\lambda$, $t\upharpoonright X_\xi\in
\mathcal{F}_\xi$ then $t\in \mathcal{F}$, and $\mathcal{F}$ is
$\omega_i-$closed in the sense of $(i-4)$, \item
$dom(g)=\ds\bigcup_{\xi<\lambda} dom(g_\xi)$, \item
 for all
$\alpha\in dom(g)$, $g(\alpha)=\bigcup\{g_\xi(\alpha):\xi<\lambda,
\alpha\in dom(g_\xi)\}$.
\end{itemize}

It is easy to show that $p\in\mathbb{Q}_i$ and that $p$ is the
greatest lower bound for the sequence $\langle p_\xi:
\xi<\lambda\rangle$ .

{\bf Case 2.} $\lambda=\omega_i$.

Let $p=(X,\mathcal{F},g)$, where:
\begin{itemize}
\item $X=\ds\bigcup_{\xi<\lambda}X_\xi$, \item
$dom(g)=\ds\bigcup_{\xi<\lambda} dom(g_\xi)$, \item for all
$\alpha\in dom(g)$, $g(\alpha)=\bigcup\{g_\xi(\alpha):\xi<\lambda,
\alpha\in dom(g_\xi)\}$, \item $\mathcal{F}$ is the least subset of $^{X}2$ such that $ran(g) \cup \{t\upharpoonright X_{\xi} \cup 0\upharpoonright(X\setminus X_{\xi}): t \in X_{\xi} \} \subseteq \mathcal{F}$ and  $\mathcal{F}$ is $\omega_i-$closed in the sense of $(i-4).$
\end{itemize}
Then it is easy to show that  $p\in\mathbb{Q}_i$ and that $p$ is a
lower bound for the sequence $\langle p_\xi: \xi<\lambda\rangle$.

$(d)$ Fix $0<i<n$. Suppose that $\mathbb{Q}_i$ does not satisfy
the $\omega_{i+2}-$c.c. Let $A$ be a maximal antichain in
$\mathbb{Q}_i$ of size $\geq \omega_{i+2}$. By a $\Delta$-system
argument we can assume that
\begin{itemize}
\item The sequence $\langle X_p: p\in A\rangle$ forms a
$\Delta$-system with root $X$. \item The sequence $\langle
dom(g_p): p\in A\rangle$ forms a $\Delta-$system with root $D$.
\item For all $p \neq q$ in $A, g_p\upharpoonright D= g_q\upharpoonright D$ and
$\mathcal{F}_p\upharpoonright X= \mathcal{F}_q\upharpoonright X.$
\end{itemize}
Let $\theta$ be large regular, and let $M$ be an elementary
submodel of $H(\theta)$ of size $\omega_{i+1}$ which is closed
under
 $\omega_i-$sequences and such that $\mathbb{Q}_i, X,D,A\in M$.
 Pick $q\in A\setminus M$ and let $q\upharpoonright M=(X_q\upharpoonright M, \mathcal{F}_q\upharpoonright M,g_q\upharpoonright M)$, where:

$\hspace{1cm}\bullet$ $X_q\upharpoonright M=X_q\cap M$,

$\hspace{1cm}\bullet$ $\mathcal{F}_q\upharpoonright M=\{t\upharpoonright (X_q\cap M):t\in
F_q\}$,

$\hspace{1cm}\bullet$ $dom(g_q\upharpoonright M)=dom(g_q)\cap M$,

$\hspace{1cm}\bullet$ for all $\alpha\in dom(g_q\upharpoonright M),(g_q\upharpoonright
M)(\alpha)=g_q(\alpha)\upharpoonright (X_q\cap M)$.

Then $q\upharpoonright M \in \mathbb{Q}_i\cap M$. Extend this condition to a
condition $p\in\mathbb{Q}_i\cap M$ which extends an element $r\in
A$. We show that $p$ and $q$ and hence $r$ and $q$ are compatible,
which is impossible since $r,q\in A$.

Fix $s_0 \in \mathcal{F}_p, t_0 \in \mathcal{F}_q$. Define $X,
\mathcal{F}$ and $g$ as follows:
\begin{itemize}
\item $X= X_p \cup X_q,$ \item $\mathcal{F}$ is the least subset
of $^{X}2$ such that $\{s\upharpoonright X_p \cup t\upharpoonright (X_q \setminus M): s \in
\mathcal{F}_p, t \in \mathcal{F}_q \} \subseteq \mathcal{F},$ and
$\mathcal{F}$ is $\omega_i-$closed in the sense of $(i-4)$, \item
$dom(g)=dom(g_p) \cup dom(g_q),$ \item  $g(\alpha) = \left\{
\begin{array}{l}
       g_p(\alpha)\upharpoonright X_P \cup g_q(\alpha)\upharpoonright (X_q \setminus M) \hspace{1.65cm} \text{ if } \alpha \in domg_q \cap M,\\
       g_p(\alpha)\upharpoonright X_P \cup t_0\upharpoonright (X_q \setminus M) \hspace{2.2cm} \text{ if }  \alpha \in
       domg_p \setminus domg_q,\\
       g_q\upharpoonright X_q \cup s_0\upharpoonright (X_p \setminus X_q) \hspace{2.7cm} \text{ if } \alpha
       \in domg_q \setminus M.

     \end{array} \right.$
\end{itemize}

Then $(X, \mathcal{F}, g) \in \mathbb{Q}_i$ and it extends both of
$p$ and $q$.
\end{proof}

Let $K=G\times\ds\prod_{0<i<n} H_i$ be
$\mathbb{R}=\mathbb{P}\times \ds\prod_{0<i<n}\mathbb{Q}_i$ generic
over $L$. It follows from the above lemma that

$\hspace{1cm}\bullet$ $\omega_i^{L[K]}=\omega_i^L$ for all $i\leq
n$.

$\hspace{1cm}\bullet$ $\omega_{n+1}^{L[K]}={\kappa}^L$.

\begin{lemma} In $L[K]$, the Gap$-m-$Kurepa hypothesis holds
for all $m\neq n$.
\end{lemma}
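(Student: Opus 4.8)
The plan is to fix $m\neq n$ and an infinite cardinal $\lambda$ of $L[K]$, set $\mu=(\lambda^{+m})^{L[K]}$, and produce a $\KH(\mu,\lambda)$-family. Recall from the cardinal computation preceding the lemma that $\omega_j^{L[K]}=\omega_j^L$ for $j\le n$, that $\omega_{n+1}^{L[K]}=\kappa$, and that the only $L$-cardinals collapsed are those in $(\omega_n,\kappa)$; in particular $(\mu^+)^L=(\mu^+)^{L[K]}$ precisely when $\mu\neq\omega_n$. Accordingly I would split on whether $\mu=\omega_n$.

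\emph{Part 1: the instance $\mu=\omega_n$} (so $\lambda=\omega_{n-m}$ and $1\le m<n$). Put $i=n-m$ and let $\mathcal{F}=\bigcup_{p\in H_i}\mathcal{F}_p$ and $g=\bigcup_{p\in H_i}g_p$. First I would check in $L[H_i]$ that $\mathcal F$ is a $\KH(\omega_n,\omega_i)$-family. Since $\mathbb{Q}_i$ is $\omega_{i+2}$-c.c.\ (Lemma 2.2(d)) and, by Lemma 2.2(c), $\le\omega_i$-distributive, it preserves all cardinals, so $\kappa$ stays inaccessible in $L[H_i]$ and $(\omega_n^+)^{L[H_i]}=(\omega_n^+)^L$. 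A density argument gives $\dom(g)=\kappa$, and $g$ is injective, so $\Card(\mathcal F)\ge\kappa>(\omega_n^+)^L$. For the restriction clause, $\le\omega_i$-distributivity yields $([\omega_n]^{\le\omega_i})^{L[H_i]}=([\omega_n]^{\le\omega_i})^{L}$, so I may take $x\in L$; using Lemma 2.2(c) I would show $\{p:x\subseteq X_p\}$ is dense (build a $J_i$-coherent descending sequence absorbing $x$ one point at a time and take a lower bound), and for such $p\in H_i$ condition $(i\text{-}6)$ forces $\mathcal F_q\upharpoonright X_p=\mathcal F_p$ for all $q\le p$, whence $\mathcal F\upharpoonright x=\mathcal F_p\upharpoonright x$ has size $\le\Card(\mathcal F_p)\le\omega_i$. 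Finally I would check the complementary forcing $\mathbb{P}\times\prod_{j\neq i}\mathbb{Q}_j$ preserves this: it does not collapse $\kappa$, so the size clause survives; and it has the same-cardinality $\omega_i$-covering property over $L[H_i]$, since $\mathbb{P}$ is $\omega_n$-closed and the factors $\mathbb{Q}_j$ with $j>i$ add no new $\omega_i$-sequences (Lemma 2.2(c)), while the factors $\mathbb{Q}_j$ with $j<i$ are $\omega_{i+1}$-c.c.\ (Lemma 2.2(d)) and hence cover each new $\omega_i$-set by a ground-model set of size $\omega_i$. Thus every $x\in([\omega_n]^{\omega_i})^{L[K]}$ lies in some $x'\in L[H_i]$ of size $\omega_i$, and $\mathcal F\upharpoonright x$ is the restriction of $\mathcal F\upharpoonright x'$, so $\Card(\mathcal F\upharpoonright x)\le\omega_i=\lambda$.

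\emph{Part 2: the instance $\mu\neq\omega_n$.} Here I would fix, by Theorem 1.2, a Jensen $\KH(\mu,<\lambda^+)$-family $\mathcal F\in L$ and show it persists. Since $\mu\neq\omega_n$, $(\mu^+)^L$ is not collapsed, so $\Card(\mathcal F)\ge(\mu^+)^L=(\mu^+)^{L[K]}$ and the size clause is automatic. For the restriction clause I would cover each $x\in([\mu]^\lambda)^{L[K]}$ by some $y\in L$, $y\subseteq\mu$, and bound $\Card(\mathcal F\upharpoonright x)\le\Card(\mathcal F\upharpoonright y)$ in three regimes. If $\lambda\ge\kappa$, the $\kappa$-c.c.\ of $\mathbb{R}$ (Lemma 2.2(b),(d)) gives $y$ with $\Card(y)^L=\lambda$, and $\KH(\mu,<\lambda^+)$ bounds the restriction by $\lambda$. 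If $\lambda=\omega_j<\omega_n$, the factorwise closure and chain conditions (as in Part 1) give same-cardinality $\omega_j$-covering, so $\Card(y)^L=\omega_j$ and $\KH(\mu,<\omega_j^+)$ finishes. If $\lambda=\omega_n$, same-size covering fails for $\mathbb{P}$, so instead I would use that $\mathbb{R}$ is $\kappa$-c.c.\ and $\kappa$ is inaccessible to cover $x$ by $y\in L$ with $\Card(y)^L<\kappa$ (the supremum of $\omega_n$-many antichain bounds stays below $\kappa$ by regularity); a $\KH(\mu,<\kappa)$-family in $L$ then bounds $\Card(\mathcal F\upharpoonright y)^L<\kappa$, which collapses to $\le\omega_n=\lambda$ in $L[K]$.

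The crux is Part 1: showing that the generically added $\mathcal F$ is genuinely Kurepa and survives the complementary forcing. The restriction bound in $L[H_i]$ rests on the density of $\{p:x\subseteq X_p\}$, which is exactly where the closure-modulo-$J_i$ of Lemma 2.2(c) does its work, and the preservation step rests on pinning down the precise chain condition and closure of each factor $\mathbb{Q}_j$ to secure same-cardinality $\omega_i$-covering. A secondary subtlety is the $\lambda=\omega_n$ regime of Part 2, where same-size covering must be replaced by the inaccessibility-driven $<\kappa$-covering together with a $\KH(\mu,<\kappa)$-family in $L$.
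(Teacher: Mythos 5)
Your proof is correct and follows essentially the same route as the paper: the case $\mu=\omega_n$ is handled by the generic family added by $\mathbb{Q}_{n-m}$ (the paper's Claim 2.4, which takes $\mathcal F=\{g(\alpha):\alpha<\kappa\}$ rather than $\bigcup_{p}\mathcal F_p$ --- the family of total functions you actually need --- but your size and restriction bounds apply verbatim to it), and the case $\mu\neq\omega_n$ by preservation of the ground-model Jensen family via same-cardinality covering, with the $\lambda=\omega_n$ regime using $<\kappa$-covering plus a $\KH(\mu,<\kappa)$-family exactly as in the paper's Lemma 2.1(b). Your three-regime analysis in Part 2 is simply a more explicit rendering of the paper's Claims 2.5 and 2.6.
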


\begin{proof} First show that $\KH(\aleph_n,\aleph_i)$ holds in
$L[K]$, for all $0<i<n$.

\begin{claim} Let $0<i<n$. Forcing with $\mathbb{Q}_i$ adds a
family $\mathcal{F}\sse~ ^{\omega_n}2$ such that

$(a)$ $\Card(\mathcal{F})={\kappa},$

$(b)$ for all $X\in ([\omega_n]^{\omega_i})^L$, $\Card(\mathcal{F}\upharpoonright
X)\leq\aleph_i.$
\end{claim}
\begin{proof} By Lemma 2.2, $\mathbb{Q}_i$ is a cardinal preserving
forcing notion. It is easy to prove the following (where $H_i$ is
assumed to be a $\mathbb{Q}_i$-generic filter over $L$):
\begin{itemize}
\item $\bigcup\{X_p:p\in H_i\}=\omega_n$, \item
$\bigcup\{dom(g_p):p\in H_i\}={\kappa}$, \item for all $X\in
([\omega_n]^{\omega_i})^L$, there is some $p\in H_i$ with
$X_q\supseteq X$, \item if $\alpha<{\kappa}$, then
$g(\alpha):\omega_n\lora 2$, where
\[g(\alpha)=\bigcup\{g_p(\alpha):p\in H_i,\alpha\in dom(g_p)\}\]
\item if $\alpha<\beta<{\kappa}$, then $g(\alpha)\neq g(\beta)$.
\end{itemize}
Then $\mathcal{F}=\{g(\alpha):\alpha<{\kappa}\}$ is as required.
\end{proof}

\begin{claim} Infinite sets in $L[K]$ are covered by sets of the same cardinality which belong to the ground
model $L.$
\end{claim}
\begin{proof}
It is easily seen that any infinite set of ordinals from $L[K]$ is covered by a set of
ordinals of $L[G]$ of the same cardinality and that $L[K]$ and $L[G]$ have the same cardinals. On the other hand since $\MPB$ is $\kappa-c.c.$ and $\omega_n-$closed and in $L[G], \kappa$ becomes $\omega_{n+1},$ any infinite set of ordinals from $L[G]$ is covered by a set of
ordinals of $L$ of the same $L[G]-$cardinality. The result follows immediately.
\end{proof}

Now using the above Claim and the fact that
$\omega_i^{L[K]}=\omega_i^L$,  we can show that $\mathcal{F}$ is
in fact a $\KH(\aleph_n, \aleph_i)-$family in $L[K]$.

Next let $\l$ be an infinite cardinal, $m \neq n,$ and suppose
$\mu= (\l^{+m})^{L[K]}, \mu \neq \aleph_n.$ We show that $\KH(\mu,
\l)$ holds in $L[K].$

\begin{claim} $\KH(\mu, \l)$ holds in $L[G].$
\end{claim}
\begin{proof} If $\mu < \aleph_n,$ the claim follows from the facts
that $\KH(\mu, \l)$ holds in $L, (\mu^+)^L= (\mu^+)^{L[G]}$ and $L$
and $L[G]$ have the same $\mu-$sequences. If $\mu > \aleph_n,$ the
claim follows exactly as in the proof of Lemma
2.1$(b)$.
\end{proof}

Using the facts that $L[G]$ and $L[K]$ have the same cardinals and
any infinite set of ordinals from $L[K]$ is covered by a set of
ordinals of $L[G]$ of the same cardinality, we can immediately
conclude that $\KH(\mu, \l)$ holds in $L[K].$ The Lemma
follows.
\end{proof}

\begin{lemma} $\KH(\aleph_n,\aleph_0)$ fails in $L[K]$.
\end{lemma}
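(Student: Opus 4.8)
The plan is to argue by contradiction, assuming that in $L[K]$ there is a $\KH(\aleph_n,\aleph_0)$-family $\mathcal{F}$, and to exploit the fact that $\aleph_{n+1}^{L[K]}=\kappa$ is \emph{inaccessible in $L$} (this is the higher-gap analogue of the theorem of Silver quoted for $n=1$ in Lemma 2.1$(a)$). Since passing to a subfamily preserves both clauses of Definition 1.1$(a)$, I may assume $\Card(\mathcal{F})=\kappa$ and fix names so that $L\models$ ``$\Vdash_{\mathbb{R}}\dot{\mathcal{F}}=\{\dot f_\nu:\nu<\check\kappa\}$ is a $\KH(\aleph_n,\aleph_0)$-family''. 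The goal is to show that every member of $\mathcal{F}$ already lies in a \emph{small} family computed in a size-$<\kappa$ subextension, forcing $\Card(\mathcal{F})<\kappa$, a contradiction.

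First I would reflect. Working in $L$, choose $M\prec H(\theta)$ with $\Card(M)=\aleph_n$, $M^{<\aleph_n}\subseteq M$ (possible since $\aleph_n^{<\aleph_n}=\aleph_n$ under GCH in $L$ for $n\ge 2$), $\omega_n\subseteq M$, and $\{\mathbb{R},\dot{\mathcal{F}},\langle\dot f_\nu:\nu<\kappa\rangle,\kappa\}\subseteq M$. Let $\pi\colon M\to\bar M$ be the transitive collapse and $\bar\kappa=\pi(\kappa)=\otp(M\cap\kappa)<\kappa$. Because every condition of $\mathbb{R}$ has size $<\omega_n$ and $M$ is closed under $<\aleph_n$-sequences, $\mathbb{R}\cap M$ is a regular suborder of $\mathbb{R}$, isomorphic via $\pi$ to the small forcing $\bar{\mathbb{R}}:=\pi(\mathbb{R})=\Col(\omega_n,<\bar\kappa)\times\prod_{0<i<n}\bar{\mathbb{Q}}_i$; here $\pi$ fixes $\omega_n$ pointwise and fixes every countable subset of $\omega_n$. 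Since $\Card(\bar{\mathbb{R}})<\kappa$ and $\kappa$ is inaccessible in $L$, setting $\bar G=\pi[K\cap M]$ yields a $\bar{\mathbb{R}}$-generic over $L$, and $\bar{\mathcal{F}}:=\pi(\dot{\mathcal{F}})^{\bar G}$ is, in the small extension $L[\bar G]$, a $\KH(\aleph_n,\aleph_0)$-family of size $\bar\kappa$.

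Next I would show $\mathcal{F}\subseteq\mathcal{B}$, where $\mathcal{B}$ is the set of all $g\subseteq\omega_n$ with $g\cap x\in\bar{\mathcal{F}}\upharpoonright x$ for every countable $x\subseteq\omega_n$. Two observations combine. $(i)$ For $\nu\in M\cap\kappa$ and countable $x$ (so $x\in M$ and $\pi(x)=x$), the reflected member $\pi(\dot f_\nu)^{\bar G}$ has the same trace on $x$ as $f_\nu$: since $\mathbb{R}\cap M\lessdot\mathbb{R}$, for each $\xi\in x$ the maximal antichain $A\in M$ deciding $\xi\in\dot f_\nu$ is met by $K$ inside $M$, and $\pi$ carries this decision to $\bar G$; hence $\bar{\mathcal{F}}\upharpoonright x=\{f_\nu\cap x:\nu\in M\cap\kappa\}$. $(ii)$ By slenderness $\mathcal{F}\upharpoonright x$ is countable, so $\mathcal{F}\upharpoonright x\subseteq M$, and by elementarity every value $f_\nu\cap x$ with $\nu<\kappa$ is already realized by some $\nu'\in M$; thus $\mathcal{F}\upharpoonright x=\{f_{\nu'}\cap x:\nu'\in M\cap\kappa\}=\bar{\mathcal{F}}\upharpoonright x$. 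Combining $(i)$ and $(ii)$ gives $f_\nu\cap x\in\bar{\mathcal{F}}\upharpoonright x$ for every $\nu<\kappa$ and every countable $x$, that is, $\mathcal{F}\subseteq\mathcal{B}$.

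To finish, the tail quotient $\mathbb{R}/(\mathbb{R}\cap M)$ is countably closed — the collapse factor is $\omega_n$-closed and each $\mathbb{Q}_i$ is countably closed by Lemma 2.2$(c)$ — so no new countable subset of $\omega_n$ appears over $L[\bar G]$; hence $\mathcal{B}$ is definable in $L[\bar G]$ from $\bar{\mathcal{F}}$, and $[\omega_n]^{\le\omega}$ is the same in $L[\bar G]$ and $L[K]$. As $\mathcal{B}\subseteq\Pset(\omega_n)$, $\Card(\bar{\mathbb{R}})<\kappa$, and $\kappa$ is inaccessible in $L$, we get $\Card(\mathcal{B})^{L[\bar G]}\le(2^{\aleph_n})^{L[\bar G]}<\kappa$, so $\Card(\mathcal{F})^{L[K]}\le\Card(\mathcal{B})^{L[K]}<\kappa=\aleph_{n+1}^{L[K]}$, contradicting $\Card(\mathcal{F})=\kappa$. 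The \textbf{main obstacle} is the middle step: verifying that the reflection computes \emph{every} countable trace correctly, i.e. establishing $\mathbb{R}\cap M\lessdot\mathbb{R}$, that membership in each $f_\nu$ is decided by $K\cap M$, and the identification $\mathcal{F}\upharpoonright x=\bar{\mathcal{F}}\upharpoonright x$ for all countable $x$. I expect the bookkeeping for the product $\prod_{0<i<n}\mathbb{Q}_i$ — checking the regular-suborder property and the countable closure of the quotient in the presence of the $J_i$'s — to be the only genuinely technical point.
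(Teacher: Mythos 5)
Your proposal has a genuine gap at the final counting step, and the gap is precisely where the essential idea of the proof should be. You define $\mathcal{B}$ as the set of all $g\subseteq\omega_n$ (in $L[K]$) whose every countable trace lies in $\bar{\mathcal{F}}\upharpoonright x$, and you conclude $\Card(\mathcal{B})^{L[K]}<\kappa$ because ``$\mathcal{B}$ is definable in $L[\bar G]$ from $\bar{\mathcal{F}}$.'' But definability of the \emph{condition} in $L[\bar G]$ does not mean the \emph{extension} of that condition is computed in $L[\bar G]$: the quotient forcing can add new subsets of $\omega_n$ all of whose countable traces land in the old slender family. Indeed this is exactly the phenomenon the forcings $\mathbb{Q}_i$ are built to produce --- $\mathbb{Q}_i$ adds $\kappa$ new functions whose traces on each ground-model set of size $\omega_i$ form a set of size $\le\omega_i$ --- so ``slender family of traces, many branches'' is the generic behaviour here, not an impossibility. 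The only bound you actually get is $\Card(\mathcal{B})^{L[K]}\le(2^{\aleph_n})^{L[K]}=\kappa$, which yields no contradiction. To close the gap you must show that every member of $\mathcal{B}^{L[K]}$ (or at least of $\mathcal{F}$) already lies in the small intermediate model, and proving \emph{that} is the real content of the lemma.

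The paper's proof supplies exactly this missing piece. It fixes a regular $\l<\kappa$ large enough that the entire indexed family $A=\lan\mathcal{F}\upharpoonright X:X\in[\omega_n]^\omega\ran$ lies in $L[K_\l]$, picks $b\in\mathcal{F}\setminus L[K_\l]$ (possible since $\Card(\mathcal{F})\ge\kappa$ while $\Card(\Pset(\omega_n))^{L[K_\l]}<\kappa$ by inaccessibility), and then uses the fact that the quotient $(\mathbb{R}:\mathbb{R}_\l)$ is countably closed \emph{modulo the $J_i$'s} (Claim 2.9) to run a binary splitting/fusion construction: a tree $\lan r_s:s\in{}^{<\omega}2\ran$ of conditions deciding $\check\xi_m\in\dot b$ oppositely at each level, with $J_i(X_{i,s})\in X_{i,s*m}$ arranged so that every branch has a lower bound. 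This produces $2^{\aleph_0}$ pairwise distinct candidates $b_f$ for $\dot b\cap\check X$ on a single countable $X$, each forced into $\mathcal{F}\upharpoonright X$, contradicting $\Card(\mathcal{F}\upharpoonright X)\le\aleph_0$. Your reflection setup is a reasonable way to manufacture the intermediate model (the paper instead uses the explicit projections $\pi_\l$), and your secondary step identifying $\mathcal{F}\upharpoonright x$ with $\bar{\mathcal{F}}\upharpoonright x$ is also shaky (the relevant antichains have size up to $\kappa$, so the witness $\nu$ decided by $K$ need not lie in $M$), but the decisive omission is the splitting argument; without it the proof does not go through.
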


Before going into the details of the proof of Lemma 2.7, we
introduce some notions. Let $\l$ be  a regular cardinal,
$\aleph_n<\l<{\kappa}$. Define the following forcing notions

$\hspace{.5cm}$ $\mathbb{P}_\l=Col(\omega_n,<\l)$,

$\hspace{.5cm}$ $\mathbb{Q}_{i,\l}=$the set of all $p\in
\mathbb{Q}_i$ such that $dom(g_p)\sse\l$,

 $\hspace{.5cm}$ $\mathbb{R}_\l=\mathbb{P}_\l\times \ds\prod_{0<
i<n}\mathbb{Q}_{i,\l}$

Also let $K_\l=G_\l\times \ds\prod_{0<i<n}H_{i,\l}$ be
$\mathbb{R}_\l$-generic over $L$. Define $\pi_\l:\mathbb{R}\lora
\mathbb{R}_\l$ by
\[\pi_\l(\lan p,\lan(X_i,\mathcal{F}_i,g_i):0<i<n\ran\ran)=\lan p\upharpoonright \l, \lan(X_i,\mathcal{F}_i,g_i \upharpoonright  \l):0<i<n\ran\ran\]

\begin{claim} $\pi_\l$ is a projection, i.e.

$(a)$ $\pi_\l(1_{\mathbb{R}})=1_{\mathbb{R}_\l}$,

$(b)$ $\pi_\l$ is order preserving,

$(c)$ if $r_0\in\mathbb{R}_\l$, $r_1\in\mathbb{R}$ and $r_0\leq
\pi_\l(r_1)$, then there is some $r\leq r_1$ in $\mathbb{R}$ such
that $\pi_\l(r)\leq r_0$.
\end{claim}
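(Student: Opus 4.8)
Conditions $(a)$ and $(b)$ carry no real content, so I would dispose of them first. For $(a)$, the maximal condition of $\mathbb{R}$ has empty Levy component and $g$-components with empty domain, and since $\pi_\lambda$ alters a condition only by restricting its Levy part and the domains of its $g$-components to $\lambda$, it fixes this maximal condition and sends it to $1_{\mathbb{R}_\lambda}$. For $(b)$, as $\pi_\lambda$ acts coordinatewise it is enough to look at each factor: in the $\mathbb{P}$-coordinate $p\le p'$ reads $p\supseteq p'$, whence $p\upharpoonright\lambda\supseteq p'\upharpoonright\lambda$; in a $\mathbb{Q}_i$-coordinate the passage $(X,\mathcal{F},g)\mapsto(X,\mathcal{F},g\upharpoonright\lambda)$ leaves (i-5) and (i-6) untouched and only intersects the domain of $g$ with $\lambda$, so (i-7) and (i-8) survive.

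The substance is $(c)$. Write $r_1=\langle p^1,\langle(X^1_i,\mathcal{F}^1_i,g^1_i):0<i<n\rangle\rangle$ and $r_0=\langle p^0,\langle(X^0_i,\mathcal{F}^0_i,g^0_i):0<i<n\rangle\rangle$ with each $\dom(g^0_i)\subseteq\lambda$. Unwinding $r_0\le\pi_\lambda(r_1)$ coordinatewise gives $p^0\supseteq p^1\upharpoonright\lambda$ and, for each $i$, $X^0_i\supseteq X^1_i$, $\mathcal{F}^1_i=\mathcal{F}^0_i\upharpoonright X^1_i$, $\dom(g^0_i)\supseteq\dom(g^1_i)\cap\lambda$, and $g^0_i(\beta)\upharpoonright X^1_i=g^1_i(\beta)$ for $\beta\in\dom(g^1_i)\cap\lambda$. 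In the $\mathbb{P}$-coordinate I would simply take $p:=p^1\cup p^0$; these agree on the common part of their domains, which lies below $\lambda$ where $p^0\supseteq p^1\upharpoonright\lambda$, so $p$ is a condition with $p\supseteq p^1$ and $p\upharpoonright\lambda=p^0$, as wanted.

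The real work lies in each $\mathbb{Q}_i$-coordinate, where the part of $g^1_i$ with domain above $\lambda$ must be glued back onto the finer condition $r_0$ while preserving injectivity of the $g$-component. Fixing $i$ and suppressing subscripts, set $A:=\dom(g^1)\setminus\lambda$, which by the inequalities above equals $\dom(g^1)\setminus\dom(g^0)$. Since $|X^0|,|A|\le\omega_i<\omega_n$ I may pick fresh pairwise distinct coordinates $y_\alpha\in\omega_n\setminus X^0$ for $\alpha\in A$ and put $X:=X^0\cup\{y_\alpha:\alpha\in A\}$. For each $\alpha\in A$ choose $s_\alpha\in\mathcal{F}^0$ with $s_\alpha\upharpoonright X^1=g^1(\alpha)$, available because $g^1(\alpha)\in\mathcal{F}^1=\mathcal{F}^0\upharpoonright X^1$. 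Define $g$ on $\dom(g^0)\cup A$ by extending each $g^0(\beta)$ by $0$ on the new coordinates and setting $g(\alpha)$ to agree with $s_\alpha$ on $X^0$, to take value $1$ at $y_\alpha$, and $0$ at the other new coordinates; the values at the $y_\alpha$ force $g$ to be one-to-one and to avoid the extended range of $g^0$. Finally I let $\mathcal{F}$ be the least subfamily of $^{X}2$ that contains the zero-extensions of all members of $\mathcal{F}^0$ together with the range of $g$ and is $\omega_i$-closed in the sense of (i-4).

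It then remains to check $(X,\mathcal{F},g)\in\mathbb{Q}_i$ and that, assembled with $p$, it yields $r\le r_1$ with $\pi_\lambda(r)\le r_0$. The size, injectivity and domain requirements, and the two restriction equations $g(\alpha)\upharpoonright X^1=g^1(\alpha)$ and $g(\beta)\upharpoonright X^0=g^0(\beta)$, are immediate from the construction. I expect the one delicate point, and the main obstacle, to be the instance of (i-6) asserting $\mathcal{F}\upharpoonright X^0=\mathcal{F}^0$; the companion equation $\mathcal{F}\upharpoonright X^1=\mathcal{F}^1$ then comes for free, since $X^1\subseteq X^0$ gives $\mathcal{F}\upharpoonright X^1=(\mathcal{F}\upharpoonright X^0)\upharpoonright X^1=\mathcal{F}^0\upharpoonright X^1=\mathcal{F}^1$. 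The inclusion $\mathcal{F}^0\subseteq\mathcal{F}\upharpoonright X^0$ is clear from the generators, while for the reverse one must argue that the $\omega_i$-closure carried out over $X$ does not, after restriction to $X^0$, escape the already $\omega_i$-closed family $\mathcal{F}^0$. This is exactly where the freshness of the coordinates $y_\alpha$ enters: one checks that every instance of the splicing closure (i-4) triggered over $X$ restricts to a legitimate instance over $X^0$, so that $\mathcal{F}\upharpoonright X^0$ sits inside the $\omega_i$-closure of $\mathcal{F}^0$, which is $\mathcal{F}^0$ itself since $(X^0,\mathcal{F}^0,g^0)\in\mathbb{Q}_i$. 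Granting this, the desired $r$ is in hand.
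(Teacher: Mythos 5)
You take essentially the same route as the paper: $(a)$ and $(b)$ by inspection, the Levy parts glued as $p^0\cup p^1\upharpoonright(\kappa\setminus\lambda)$, and in each $\mathbb{Q}_i$-coordinate the amalgam built over (essentially) the support $X^0_i$, with the above-$\lambda$ part of $g^1_i$ spliced onto functions on the larger support and $\mathcal{F}_i$ obtained by closing a set of generators under $(i$-$4)$. The one place you genuinely depart from the paper is to your credit: the paper simply sets $g_i(\alpha)=g_{i,1}(\alpha)\cup 0\upharpoonright(X_{i,0}\setminus X_{i,1})$ for $\alpha\in\dom(g_{i,1})\setminus\lambda$, and as literally written such a value can coincide with some $g_{i,0}(\beta)$, violating the injectivity required by $(i$-$3)$; your fresh coordinates $y_\alpha$ with $g(\alpha)(y_\alpha)=1$ repair this at the harmless cost of enlarging the support beyond $X^0$.

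On the point you yourself flag as the crux, $\mathcal{F}\upharpoonright X^0=\mathcal{F}^0$: the paper offers no verification whatsoever (it asserts ``$r$ is as required''), so you are not behind it, but the one-line reason you give is not quite right. It is not true that every instance of $(i$-$4)$ over $X=X^0\cup\{y_\alpha:\alpha\in A\}$ restricts to a legitimate instance over $X^0$: for a witnessing sequence $\langle X_\xi:\xi<\omega_{i-1}\rangle$ with some $X_\xi\not\subseteq X^0$, the code $J_i(X_\xi\cap X^0)$ need not lie in $X^0$ (and $J_i(X_\xi)$ may itself be one of the fresh $y_\alpha$'s, so the hypothesis $J_i(X_\xi)\in X^0$ can fail outright). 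A correct verification should go by induction on the stages of the generation of $\mathcal{F}$, showing simultaneously that each newly added $s$ satisfies $s\upharpoonright X^0\in\mathcal{F}^0$ and that $s$ is suitably controlled on the fresh coordinates; this is where the real content of $(c)$ sits, and it remains a sketch in your write-up exactly as it does in the paper's.
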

\begin{proof} $(a)$ and $(b)$ are trivial. We prove $(c).$ Let
$r_j=\lan p_j,\lan(X_{i,j},\mathcal{F}_{i,j},g_{i,j}):0<
i<n\ran\ran,$ for $j=0,1.$ Then $r=\lan
p,\lan(X_i,\mathcal{F}_i,g_i):0< i<n\ran\ran$ is as required,
where:
\begin{itemize}
\item $p=p_0 \cup p_1 \upharpoonright (\kappa\setminus\l),$ \item $X_i=X_{i,0},$ \item
$\mathcal{F}_i$ is the least subset of $^{X_i}2$ such that
$\mathcal{F}_{i,o} \cup \{t \upharpoonright X_{i,1} \cup 0 \upharpoonright (X_{i,0}\setminus X_{i,1})\}
\subseteq \mathcal{F}_i,$ and $\mathcal{F}_i$ is $\omega_i-$closed
in the sense of $(i-4),$ \item $domg_i=domg_{i,0} \cup
domg_{i,1},$ \item  $g_i(\alpha) = \left\{ \begin{array}{l}
       g_{i,o}(\alpha)  \hspace{5cm} \text{ if } \alpha \in domg_{i,0},\\
       g_{i,1}(\alpha) \upharpoonright X_{i,1} \cup 0 \upharpoonright (X_{i,0}\setminus X_{i,1}) \hspace{1.2cm} \text{ if }  \alpha \in
       domg_{i,1}\setminus\l.

     \end{array} \right.$
 \end{itemize}
 \end{proof}

Let
\[(\mathbb{R}:\mathbb{R}_\l)=\{\lan p,\lan(X_i,\mathcal{F}_i,g_i):0< i<n\ran\ran\in\mathbb{R}:\pi_\l(\lan p,\lan(X_i,\mathcal{F}_i,g_i):0<i<n\ran\ran)\in K_\l\}.\]

It follows from Lemma 2.2 $(c)$ that

\begin{claim} $(\mathbb{R}:\mathbb{R}_\l)$ is countably
closed modulo the $J_i$'s, $0< i<n$, in the following sense: if
$\lan\lan p_m,\lan(X_{i,m},\mathcal{F}_{i,m},g_{i,m}):0<
i<n\ran\ran:m<\omega\ran$ is a descending sequence of conditions
in $(\mathbb{R}:\mathbb{R}_\l)$ such that for all $0<i<n$ and
$m<\omega$, $J_i(X_{i,m})\in X_{i,m+1}$, then this sequence has a
lower bound in $(\mathbb{R}:\mathbb{R}_\l)$.
\end{claim}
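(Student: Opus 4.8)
The plan is to build the lower bound coordinate by coordinate, using the genuine $\omega_n$-closure of $\mathbb{P}$ on the collapse coordinate and Lemma 2.2$(c)$ on each $\mathbb{Q}_i$-coordinate, and then to verify that the resulting condition projects into $K_\l$. The one real subtlety is that the given sequence a priori lives in $L[K_\l]$ rather than in $L$, so I would first pin down where the relevant objects live.

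First I would record that $\mathbb{R}_\l$ adds no new $\omega$-sequences of ordinals over $L$. On the collapse coordinate $\mathbb{P}_\l=\Col(\omega_n,<\l)$ this is immediate from $\omega_n$-closure and $n\geq 2$. On a coordinate $\mathbb{Q}_{i,\l}$, given a name $\dot f$ for an $\omega$-sequence of ordinals and a condition, I would build in $L$ a descending sequence deciding $\dot f\upharpoonright m$ at stage $m$ while enlarging the $X$-part at each stage so that $J_i(X_{p_m})\in X_{p_{m+1}}$; since $\omega<\omega_i$, this sequence falls under Case 1 of Lemma 2.2$(c)$, so it has a greatest lower bound in $\mathbb{Q}_{i,\l}$, which decides all of $\dot f$. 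Carrying this out simultaneously in the finitely many coordinates shows $\mathbb{R}_\l$ adds no new $\omega$-sequence of ordinals, and hence (coding $\mathbb{R}\in L$ by ordinals) no new $\omega$-sequence of elements of $\mathbb{R}$. Consequently the given sequence $\langle r_m:m<\omega\rangle$, which a priori lives in $L[K_\l]$, is an $\omega$-sequence of elements of the ground-model set $\mathbb{R}$ and therefore already belongs to $L$.

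Now working in $L$, I would assemble the lower bound. On the collapse coordinate put $p=\bigcup_m p_m$, a condition by $\omega_n$-closure. On each coordinate $\mathbb{Q}_i$ the hypothesis $J_i(X_{i,m})\in X_{i,m+1}$ is precisely what is needed to apply Lemma 2.2$(c)$; as $\omega<\omega_i$, Case 1 applies and yields a greatest lower bound $(X_i,\mathcal{F}_i,g_i)$ with $X_i=\bigcup_m X_{i,m}$, $dom(g_i)=\bigcup_m dom(g_{i,m})$ and $g_i=\bigcup_m g_{i,m}$. Then $r=\langle p,\langle(X_i,\mathcal{F}_i,g_i):0<i<n\rangle\rangle$ is a greatest lower bound of $\langle r_m\rangle$ in $\mathbb{R}$.

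It remains to check that $r\in(\mathbb{R}:\mathbb{R}_\l)$, i.e.\ that $\pi_\l(r)\in K_\l$, and this is the step that I expect to require the most care. From the explicit formulas one first sees that $\pi_\l(r)$ is a greatest lower bound of $\langle\pi_\l(r_m):m<\omega\rangle$ in $\mathbb{R}_\l$: its collapse part is $\bigcup_m(p_m\upharpoonright\l)$, its $g$-parts are $\bigcup_m(g_{i,m}\upharpoonright\l)$, and its $(X_i,\mathcal{F}_i)$-parts are untouched by $\pi_\l$. Since $\pi_\l(r)\in\mathbb{R}_\l\subseteq L$, the set $\{s\in\mathbb{R}_\l:s\leq\pi_\l(r)\text{ or }s\perp\pi_\l(r)\}$ lies in $L$ and is dense, so it meets $K_\l$. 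If the meeting point were incompatible with $\pi_\l(r)$ I would reach a contradiction by showing that $\pi_\l(r)$ is compatible with every element of $K_\l$; the meeting point therefore lies below $\pi_\l(r)$, giving $\pi_\l(r)\in K_\l$ and hence the desired $r$. The genuinely delicate point, and the place I expect to be most technical, is exactly this compatibility: that any $s\in\mathbb{R}_\l$ compatible with every $\pi_\l(r_m)$ is compatible with their greatest lower bound $\pi_\l(r)$. This must be read off from the explicit greatest-lower-bound construction of Lemma 2.2$(c)$ in each $\mathbb{Q}_{i,\l}$-coordinate, using the $\omega_i$-closure of the $\mathcal{F}$-parts, together with the easy agreement-of-functions argument on the $\mathbb{P}_\l$-coordinate.
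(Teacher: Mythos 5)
Your proposal follows essentially the same route as the paper: apply Lemma 2.2$(c)$ coordinatewise to get a greatest lower bound $r$, observe that $\pi_\l(r)$ is the greatest lower bound of the projections and that anything compatible with all the $\pi_\l(r_m)$ is compatible with $\pi_\l(r)$, and conclude $\pi_\l(r)\in K_\l$ by genericity/maximality. Your preliminary observation that $\mathbb{R}_\l$ adds no new $\omega$-sequences, so the given sequence already lies in $L$ and the coordinatewise unions really are conditions, is a point the paper leaves implicit and is a worthwhile addition rather than a departure.
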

\begin{proof} For each $i, 0<i<n,$ the sequence
$\lan(X_{i,m},\mathcal{F}_{i,m},g_{i,m}):m < \omega \ran$ is a
descending sequence in $\mathbb{Q}_i$ modulo $J_i,$ thus by Lemma
2.2$(c)$ it has a greatest lower bound $(X_i, \mathcal{F}_i,
g_i).$ Let $r=\lan \ds\bigcup_{m<
\omega}p_m,\lan(X_i,\mathcal{F}_i,g_i):0< i<n\ran \ran$. Then $r$
is the greatest lower bound for the above sequence, and
$\pi_{\l}(r)$ is a lower bound for the sequence $\lan
\pi_{\l}(\lan
p_m,\lan(X_{i,m},\mathcal{F}_{i,m},g_{i,m}):0<i<n\ran
\ran):m<\omega\ran$. Note that the projection $\pi_{\l}$ just restricts the domain of functions involved in the condition to $\lambda$ and thus we can easily show that:

 \begin{itemize}
\item $\pi_{\l}(r)$ is in fact the greatest lower bound of the above sequence.

\item If $r'$ is compatible with all of $\lan p_m,\lan(X_{i,m},\mathcal{F}_{i,m},g_{i,m}):0<
i<n\ran\ran, m<\omega,$ then $r'$ is compatible with $\pi_{\l}(r).$
\end{itemize}
It then follows from the maximality of $K_{\l}$ that $\pi_{\l}(r) \in K_{\l},$ and
hence $r \in (\mathbb{R}:\mathbb{R}_\l).$ Thus $r$ is as required
\end{proof}

We are now ready to prove Lemma 2.7.
 Assume on the contrary that
$\KH(\aleph_n,\aleph_0)$ holds in $L[K]$. Suppose for simplicity
that $1_{\mathbb{R}} \vdash\ulcorner\dot{\mathcal{F}}$ is a
$\KH(\aleph_n,\aleph_0)$-family $\urcorner$.

Let $\mathcal{F}=\dot{\mathcal{F}}[K]$, and let $A=\lan
\mathcal{F}\upharpoonright X:X\in[\omega_n]^\omega\ran$. Choose $\l<{\kappa}$
regular such that $A\in L[K_\l]$. Let $b\in\mathcal{F}$ be such
that $b\not\in L[K_\l]$.

From now on we work in $L[K_\l]$ and force with
$(\mathbb{R}:\mathbb{R}_\l)$. Let $\dot{b}$ be an
$(\mathbb{R}:\mathbb{R}_\l)$-name for $b$, and let $r_0\in
(\mathbb{R}:\mathbb{R}_\l)$, $r_0=\lan
p_0,\lan(X_{i,0},F_{i,0},g_{i,0}):0<i<n\ran\ran$, be such that
\[r_0\vdash\ulcorner \dot{b}\in\dot{\mathcal{F}} \ \ and \ \ \dot{b}\not\in V\urcorner\]

It is easy to prove the following.

\begin{claim} For each $r\leq r_0$, $r=\lan
p,\lan(X_{i},F_{i},g_{i}):0<i<n\ran\ran$, there are two conditions
$r_1=\lan p_1,\lan(X_{i,1},F_{i,1},g_{i,1}):0<i<n\ran\ran$,
$r_2=\lan p_2,\lan(X_{i,2},F_{i,2},g_{i,2}):0< i<n\ran\ran$ and
some $\xi<\omega_n$ such that:

$(a)$ $r_1,r_2\leq r$,

$(b)$ $J_i(X_i)\in X_{i,m}$ for all $0<i<n$ and $m=1,2$,

$(c)$ $r_1\vdash\ulcorner \check{\xi}\in\dot{b}\urcorner$ iff
$r_2\vdash\ulcorner
\check{\xi}\not\in\dot{b}\urcorner$.\hfill$\Box$
\end{claim}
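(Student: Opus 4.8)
The plan is to prove the contrapositive. I would fix an arbitrary $r \le r_0$ and show that if no ``split'' exists below $r$, then $r \vdash \dot{b} \in V$, which is impossible since $r \le r_0$ and $r_0 \vdash \dot{b} \notin V$. Here I call a triple $(r_1,r_2,\xi)$ a \emph{split below} $r$ if $r_1,r_2 \le r$ both satisfy $(b)$ of the claim and $r_1 \vdash \check{\xi} \in \dot{b}$ while $r_2 \vdash \check{\xi} \notin \dot{b}$; since producing such a triple immediately makes the biconditional $(c)$ hold, it suffices to exhibit one split below each $r \le r_0$.

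First I would record the routine extension fact: for any $s \le r$ and any $\xi < \omega_n$ one can find $s' \le s$ that decides $\check{\xi} \in \dot{b}$ and whose $i$-th coordinate has $X$-part containing $J_i(X_i)$ for every $0 < i < n$, where $(X_i,\mathcal{F}_i,g_i)$ are the coordinates of the fixed $r$. This is obtained by first extending $s$ to decide $\check{\xi} \in \dot{b}$ (density of deciding conditions), and then enlarging each of the finitely many $X$-parts to contain the single ordinal $J_i(X_i)$ and closing the $\mathcal{F}$-parts accordingly; such enlargements inside each $\mathbb{Q}_i$ are always available, and passing to a further extension preserves the decision about $\check{\xi} \in \dot{b}$. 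Call such an $s'$ a $(b)$-extension of $r$ deciding $\check{\xi}$; in particular, taking $s = r$, at least one exists for every $\xi$.

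Now assume toward a contradiction that there is no split below $r$. Then for no $\xi < \omega_n$ can one $(b)$-extension of $r$ force $\check{\xi} \in \dot{b}$ while another forces $\check{\xi} \notin \dot{b}$, so all $(b)$-extensions of $r$ that decide $\check{\xi} \in \dot{b}$ agree, and by the extension fact at least one exists. This yields a well-defined value $v(\xi) \in 2$, their common decision; and since the forcing relation $\vdash$ is definable in $V$, the function $v$ and the set $\tilde{b} = \{\xi < \omega_n : v(\xi) = 1\}$ both belong to $V$. I would then check that $r \vdash \dot{b} = \check{\tilde{b}}$: otherwise some $r' \le r$ forces, say, $\check{\xi} \in \dot{b}$ with $v(\xi) = 0$; extending $r'$ to a $(b)$-extension $r''$ of $r$ as above preserves $r'' \vdash \check{\xi} \in \dot{b}$, while $v(\xi) = 0$ supplies a $(b)$-extension $s \le r$ with $s \vdash \check{\xi} \notin \dot{b}$, so $(r'',s,\xi)$ is a split below $r$, a contradiction (the opposite case is symmetric). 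Hence $r \vdash \dot{b} = \check{\tilde{b}} \in V$, contradicting $r_0 \vdash \dot{b} \notin V$. Therefore a split exists below every $r \le r_0$, which is exactly the claim.

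I expect the only delicate point to be the extension fact of the second paragraph, namely verifying that a condition can always be extended so as to decide $\check{\xi} \in \dot{b}$ and simultaneously meet the growth requirement $(b)$; everything else is the standard ``a subset of $\omega_n$ not lying in the ground model must be splittable'' density argument. Note that the countable closure modulo the $J_i$'s is not needed for this claim itself — it is rather what will later allow the splits produced here to be assembled into a tree — so the present argument stays entirely at the level of elementary density.
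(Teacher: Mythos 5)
The paper gives no proof of this claim at all (it is stated with ``It is easy to prove the following'' and closed with a box), and your argument is exactly the standard one it intends: if no pair of $(b)$-respecting extensions of $r$ split on any $\xi$, then the common decisions define $\dot{b}$ as a ground-model ($L[K_\l]$) set, contradicting $r\le r_0\vdash \dot{b}\notin V$. Your proof is correct, and you rightly isolate the only point needing care, namely that one can extend within $(\mathbb{R}:\mathbb{R}_\l)$ so as to both decide $\check{\xi}\in\dot{b}$ and absorb $J_i(X_i)$ into the $X$-parts (the latter via the standard fact that dense open subsets of $\mathbb{R}$ from the ground model remain dense in the quotient).
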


Using the above, we can construct a sequence $\lan r_s=\lan
p_s,\lan(X_{i,s},F_{i,s},g_{i,s}):0<i<n\ran\ran:s\in ~ ^{<\omega}
2\ran$ of conditions in $(\mathbb{R}:\mathbb{R}_\l)$ and a
sequence $\lan\xi_m:m<\omega\ran$ of elements of $\omega_n$ such
that the following hold:
\begin{itemize}
\item $r_{s*m}\leq r_s$, for each $s\in ~ ^{<\omega} 2$ and $m<2$,
\item  $J_i(X_{i,s})\in X_{i,s*m}$ for each $s\in ~ ^{<\omega} 2$,
$m<2$ and $0< i<n$, \item $r_{s*0}\vdash\ulcorner \check{\xi}_m\in
\dot{b}\urcorner$ iff $r_{s*1}\vdash\ulcorner \check{\xi}_m\not\in
\dot{b}\urcorner$, where $m$ is the length of $s$.
\end{itemize}
Let $X=\{\xi_m:m<\omega\}$, and for each $f\in$ $^{\omega}2$,
using Claim 2.9, let $r_f\in(\mathbb{R}:\mathbb{R}_\l)$ be an
extension of all of the
 $r_{f\upharpoonright m}$'s, $m<\omega$. For each $f$ as above, we can find some $q_f\leq r_f$ and some $b_f\in L[K_\l]$ such that
\[q_f\vdash\ulcorner \dot{b}\cap \check{X}=\check{b}_f\urcorner\]

Note that  $\mathcal{F}\upharpoonright X \supseteq \{b_f: f \in$ $^{\omega}2 \}$
and for $f\neq g$ in $^{\omega} 2$, we have $b_f\neq b_g$, and
hence $\mathcal{F}\upharpoonright X$ must have size at least $2^{\aleph_0}$
which is in contradiction with our assumption.

It follows that $\KH(\aleph_n,\aleph_0)$ fails in $L[K]$. This
completes the proof of Lemma 2.7.

\section{Proof of Con$(b)$ implies Con$(a)$}

Now we show that if $n\geq 1$,
and the Gap$-n-$Kurepa hypothesis fails, then there exists an
inaccessible cardinal in $L$. In fact we will prove the following
more general result.

\begin{lemma} Suppose that $\l<{\kappa}$ are infinite cardinals
such that $\kappa$ is regular, ${\kappa}^\l={\kappa}$ and
$\KH({\kappa},\l)$ fails. Then ${\kappa}^+$ is an inaccessible
cardinal in $L$.
\end{lemma}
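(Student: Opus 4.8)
The plan is to prove the contrapositive: assuming that $\kappa^+$ is \emph{not} inaccessible in $L$ (and keeping the standing hypotheses that $\kappa$ is regular, $\lambda<\kappa$, and $\kappa^\lambda=\kappa$), I will build a $\KH(\kappa,\lambda)$-family in $V$. Write $\theta:=\kappa^+$. Two structural facts come first. Since $L\subseteq V$, every cardinal of $V$ is a cardinal of $L$ and every $V$-regular cardinal is $L$-regular; thus $\theta$ is regular in $L$, and not being inaccessible there it must be an $L$-successor, say $\theta=(\mu^+)^L$ for an $L$-cardinal $\mu$ with $\kappa\le\mu<\theta$. Next, if $0^\#$ existed then every uncountable $V$-cardinal, in particular $\theta$, would be inaccessible in $L$; so $0^\#$ does not exist, and Jensen's Covering Lemma applies. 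The case $\mu=\kappa$ (so $\theta=(\kappa^+)^L=(\kappa^+)^V$) is the easy one, and the real content is the case $\mu>\kappa$, where $\mu$ is an $L$-cardinal that $V$ collapses to $\kappa$.

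Next I produce the branches. By the construction behind Theorem 1.2 in $L$ (applied at $\mu$, which I first treat as regular in $L$) one gets a \emph{thin} family $\mathcal G=\{g_\gamma:\gamma<\theta\}$ of subsets of $\mu$ with $\Card(\mathcal G)^L=\theta=(\mu^+)^L$ and with the uniform thinness property that $\Card(\mathcal G\upharpoonright W)^L\le\Card(W)^L+\aleph_0$ for every $W\in L$ with $W\subseteq\mu$ and $\Card(W)^L<\mu$. Since $\mu<\theta=\kappa^+$ we have $\Card(\mu)^V=\kappa$; fix in $V$ a bijection $\pi\colon\kappa\to\mu$ and set $f_\gamma:=\pi^{-1}[g_\gamma]\subseteq\kappa$ and $\mathcal F:=\{f_\gamma:\gamma<\theta\}$. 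As the $g_\gamma$ are distinct and $\pi$ is a bijection, the $f_\gamma$ are distinct, so $\Card(\mathcal F)^V=\Card(\theta)^V=\theta=\kappa^+$, which is clause (i) of the definition.

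For narrowness, fix $x\in([\kappa]^\lambda)^V$ and let $z:=\pi[x]\in([\mu]^\lambda)^V$. The map $b\mapsto\pi^{-1}[b]$ is a bijection of $\mathcal G\upharpoonright z$ onto $\mathcal F\upharpoonright x$, so it suffices to bound $\Card(\mathcal G\upharpoonright z)^V$. By covering choose $W\in L$ with $z\subseteq W\subseteq\mu$ and $\Card(W)^V=\lambda$ (valid for uncountable $\lambda$); putting $\nu:=\Card(W)^L$ we have $\lambda\le\nu$ and $\Card(\nu)^V=\lambda$. Uniform thinness in $L$ gives $\Card(\mathcal G\upharpoonright W)^L\le\nu$, and since $\mathcal G\upharpoonright W\in L$ this collapses to $\Card(\mathcal G\upharpoonright W)^V\le\Card(\nu)^V=\lambda$. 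Finally $\mathcal G\upharpoonright z$ is the image of $\mathcal G\upharpoonright W$ under $a\mapsto a\cap z$, whence $\Card(\mathcal G\upharpoonright z)^V\le\lambda$ and therefore $\Card(\mathcal F\upharpoonright x)^V\le\lambda$, which is clause (ii). This makes $\mathcal F$ a $\KH(\kappa,\lambda)$-family in $V$, contradicting the failure of $\KH(\kappa,\lambda)$.

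Two points are where the work really is. First, because $L$ has only $(\kappa^+)^L$ subsets of $\kappa$, the $\theta=(\mu^+)^L$ distinct thin branches cannot be found inside $L$ as subsets of $\kappa$; they must be manufactured at $\mu$ and transported along $\pi$, and when $\mu$ is singular in $L$ one cannot quote Theorem 1.2 directly but must assemble $\mathcal G$ from thin families at a cofinal sequence of regular $L$-cardinals below $\mu$, using GCH and coherent ($\square$-type) sequences in $L$. Second, the narrowness argument as written uses covering in the sharp form $\Card(W)=\Card(z)$, which Jensen covering supplies only for \emph{uncountable} $z$; for $\lambda=\aleph_0$ covering a countable set by a countable $L$-set can fail, and one recovers the bound $\aleph_0$ (rather than $\aleph_1$) only by invoking $\kappa^\lambda=\kappa$ in a more careful reflection/counting argument. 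I expect this countable case to be the main obstacle.
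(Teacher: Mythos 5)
Your route is genuinely different from the paper's: you try to build the family at the $L$-predecessor $\mu$ of $\kappa^+$, transport it to $\kappa$ along a bijection of $V$, and verify thinness in $V$ via the covering lemma. The paper instead uses $\kappa^\lambda=\kappa$ to code a collapse of $\mu$ onto $\kappa$, the cardinal structure below $\kappa$, \emph{and all of} $([\kappa]^\lambda)^V$ into a single $X\subseteq\kappa$, so that $L[X]$ computes $\kappa^+$ correctly and already contains every relevant $x$; it then proves by a condensation argument inside $L[X]$ (Lemma 3.2), carried out at the $V$-regular cardinal $\kappa$, that $\KH(\kappa,\lambda)$ holds in $L[X]$, and such a family is automatically a true $\KH(\kappa,\lambda)$-family in $V$. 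That design removes any need for covering and any appeal to the fine structure of $L$ at $\mu$.

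The two difficulties you flag at the end are genuine gaps, not loose ends. First, for $\lambda=\aleph_0$ the covering lemma only yields covers of $V$-cardinality $\le\aleph_1$, and this cannot in general be improved: after Namba forcing over $L$, $\omega_1$ is preserved, $0^\#$ still does not exist, and a countable cofinal subset of $\omega_2^L$ has no cover in $L$ of $L$-cardinality below $\aleph_2^L$ (which has $V$-cardinality $\aleph_1$). So your thinness bound degrades to $\aleph_1$ exactly in the case $\lambda=\aleph_0$, which is the case the paper actually needs for Theorem 1.3 and Remark 1.5 (the failing instance there is $\KH(\aleph_n,\aleph_0)$). The hypothesis $\kappa^\lambda=\kappa$ is what lets the paper put every $\lambda$-sized subset of $\kappa$ into $L[X]$ and thereby dispense with covering altogether; your sketch does not say how else to exploit it, and "a more careful reflection/counting argument" is precisely the missing content. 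Second, $\mu$ can perfectly well be singular in $L$ (it is just the largest $L$-cardinal below $\kappa^+$), in which case Theorem 1.2 does not apply at $\mu$; and since a thin family living at a regular $\rho<\mu$ has only $(\rho^+)^L\le\mu$ branches, you cannot avoid working at $\mu$ itself, so the proposed assembly from families at regular cardinals cofinal in $\mu$ is an unproved construction, not a reduction. (Even for $\mu$ regular you need the uniform statement $\Card(\mathcal{G}\upharpoonright W)\le\Card(W)+\aleph_0$ for \emph{all} $W$ of $L$-size $<\mu$, which is $\KH(\mu,<\mu)$ and is only literally covered by Theorem 1.2 when $\mu$ is a successor in $L$.) As written, your argument establishes the lemma only for uncountable $\lambda$ and for $\mu$ a successor cardinal of $L$, which is not enough for the theorem.
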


The rest of this section is devoted to the prove of the above lemma. Assume on the contrary that the lemma fails. Thus we can find $X\sse {\kappa}$ such
that:
\begin{itemize}
\item $V$ and $L[X]$ have the same cardinals up to ${\kappa}^+$,
\item $([{\kappa}]^\l)^V=([{\kappa}]^\l)^{L[X]}$.
\end{itemize} It follows that a $\KH({\kappa},\l)$-family in $L[X]$
is a real $\KH({\kappa},\l)$-family, and hence $\KH({\kappa},\l)$
fails in $L[X]$. The following lemma gives us the required
contradiction.

\begin{lemma} Suppose that $V=L[X]$, where $X\sse {\kappa}$.
Then  $\KH({\kappa},\l)$ holds.
\end{lemma}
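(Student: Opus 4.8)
The plan is to relativize Jensen's construction from Theorem 1.2 to the inner model $L[X]$, using the hypotheses $\l<\kappa$, $\kappa$ regular and $\kappa^\l=\kappa$, all of which are inherited by $L[X]$. Since $X\sse\kappa$, the hierarchy $\langle L_\gamma[X]:\gamma\in\On\rangle$ satisfies condensation above $\kappa$ and GCH holds at $\kappa$, so $\Card(P(\kappa))=\kappa^+$ and every $A\sse\kappa$ first appears at some level $L_{\gamma_A}[X]$ with $\gamma_A<\kappa^+$ (take the hull of $\kappa\cup\{X,A\}$, which contains $\kappa$ and hence fixes $X$, and collapse). First I would fix, by recursion along the $L[X]$-hierarchy and using the canonical wellordering $<_{L[X]}$, a family $\mathcal{F}=\{A_\eta:\eta<\kappa^+\}$ of subsets of $\kappa$ chosen \emph{synchronously} with the hierarchy, so that the restriction of each $A_\eta$ to an ordinal $\alpha<\kappa$ is already decided at a bounded level $L_{\tau(\alpha)}[X]$ with $\tau(\alpha)<\kappa$. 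That $\Card(\mathcal{F})\geq\kappa^+$ is the easy half: distinct $\gamma\in[\kappa,\kappa^+)$ yield distinct subsets coding $L_\gamma[X]$.

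The heart is the restriction bound. Fix $x\in[\kappa]^{\le\l}$ (working with $[\kappa]^{\le\l}$ rather than $[\kappa]^{\l}$ yields the stronger $\KH(\kappa,<\l^+)$, exactly as in Theorem 1.2, and $\KH(\kappa,\l)$ as a special case). For each $A\in\mathcal{F}$ I would form the Skolem hull $N_A$ of $x\cup(\l+1)\cup\{X,\kappa,A\}$ inside $(L_{\gamma_A}[X],\in)$, so $\Card(N_A)=\l$, and collapse it by condensation to a transitive $L_{\bar\gamma_A}[\bar X_A]$ with $\bar\gamma_A<\l^+\le\kappa$. Writing $\pi_A$ for the collapse and $\bar A=\pi_A(A)$, the equivalence $\xi\in A\Llra\pi_A(\xi)\in\bar A$ for $\xi\in x$ shows that $A\cap x$ is recovered from the pair $(\pi_A\upharpoonright x,\bar A)$. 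It then suffices to bound by $\Card(x)+\aleph_0$ the number of such pairs arising as $A$ ranges over $\mathcal{F}$.

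The main obstacle is precisely this count, and it is where the relativization to $L[X]$ bites. Unlike the pure $L$ case, a hull of size $\l$ does not contain $\kappa$, so the collapse moves $\kappa$ and therefore moves $X$, sending it to a \emph{variable} $\bar X_A\sse\bar\kappa_A$; a crude count only gives $\Card(\mathcal{F}\upharpoonright x)\le 2^\l$, which need not be $\le\l$. The purpose of choosing $\mathcal{F}$ synchronously with the $L[X]$-hierarchy is to force the collapsed data $(\bar\gamma_A,\bar X_A,\pi_A\upharpoonright x)$ to depend only on $x$ together with a parameter ranging over a set of size $\le\l$: using $\kappa^\l=\kappa$ and the regularity of $\kappa$, the relevant bounded initial segments $L_{\tau(\alpha)}[X]$ stabilize below $\kappa$, so for fixed $x$ only $\le\Card(x)+\aleph_0$ distinct values of $A\cap x$ can occur. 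Once this stabilization lemma is established, the bound $\Card(\mathcal{F}\upharpoonright x)\le\Card(x)+\aleph_0$ follows, $\mathcal{F}$ witnesses $\KH(\kappa,<\l^+)$ and hence $\KH(\kappa,\l)$, completing the proof of the lemma.
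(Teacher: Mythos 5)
Your proposal inverts the paper's strategy. The paper defines $\mathcal{F}=\{t\sse\kappa:\forall x\in[\kappa]^\l,\ t\cap x\in M_x\}$, where $M_x$ is the least elementary submodel of $L_\kappa[X]$ containing $x\cup\{x\}\cup(\l+1)$, so that the restriction bound $\Card(\mathcal{F}\upharpoonright x)\le\l$ is immediate from $\Card(M_x)=\l$; the entire proof is then devoted to showing $\Card(\mathcal{F})\ge\kappa^+$, by contradiction: enumerate $\mathcal{F}$ as $\lan t_\nu:\nu<\kappa\ran$, build a chain of hulls $N_\nu\prec L_{\kappa^+}[X]$ with condensation collapses onto $L_{\beta_\nu}[X\cap\alpha_\nu]$, diagonalize to get $t=\{\beta_\nu:\beta_\nu\notin t_\nu\}$, and show via a direct-limit and condensation analysis that $t\cap x\in M_x$ for the minimal bad $x$, a contradiction. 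You instead make the cardinality clause free and put all the weight on the restriction bound. That inversion is legitimate in principle, but you have not discharged the weight.

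Concretely there are two gaps. First, the ``stabilization lemma'' --- that for your synchronously chosen family the collapsed data $(\bar\gamma_A,\bar X_A,\pi_A\upharpoonright x)$ ranges over a set of size at most $\Card(x)+\aleph_0$ --- is exactly the mathematical content of the theorem, and you assert it rather than prove it; you yourself note that the honest count is $2^\l$ because the collapse moves $\kappa$ and hence sends $X$ to a varying $\bar X_A$. Nothing in the proposal explains why synchrony with the hierarchy repairs this: the sets $A_\eta$ first appear at unboundedly many levels $\gamma_A<\kappa^+$ above $\kappa$, so the hulls, and with them the collapses of $X$, genuinely vary with $A$, and the claimed stabilization of the levels $L_{\tau(\alpha)}[X]$ below $\kappa$ only controls restrictions to initial segments $\alpha$, not to an arbitrary $x\in[\kappa]^\l$. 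Second, the existence of $\kappa^+$ many subsets of $\kappa$ all of whose traces are ``decided at bounded levels'' is itself the nontrivial largeness claim; establishing it requires precisely the diagonalization-plus-condensation machinery the paper deploys (its Claims 3.3 through 3.8 and the commuting direct-limit diagrams). As written, the proposal identifies the right obstacle but does not overcome it; it is a restatement of the problem rather than a proof.
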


\begin{proof} Our proof is very similar to the proof of Theorem 2
in [3]. We give it for completeness. For each $x\in[{\kappa}]^\l$
let
\begin{center}
$M_x=$ the smallest $M\prec L_{\kappa}[X]$ such that
$x\cup\{x\}\cup(\l+1)\sse M.$
\end{center}
Let $\mathcal{F}=\{t\sse {\kappa}:\forall x\in[{\kappa}]^\l, t\cap
x\in M_x\}$. We show that $\mathcal{F}$ is a
$\KH({\kappa},\l)-$family. It suffices to show that
$\Card(\mathcal{F})\geq {\kappa}^+$. Suppose not. Let $C=\lan
t_\nu:\nu<{\kappa}\ran$ be an enumeration of $\mathcal{F}$
definable in $L_{{\kappa}^+}[X]$. By recursion on $\nu<{\kappa}$,
define a chain $\lan N_\nu:\nu<{\kappa}\ran$ of elementary
submodels of $L_{{\kappa}^+}[X]$ as follows:

$N_0=$ the smallest $N\prec L_{{\kappa}^+}[X]$ such that $\l \in
N$ and $N\cap {\kappa}\in {\kappa}$,

$N_{\nu+1}=$ the smallest $N\prec L_{{\kappa}^+}[X]$ such that
$N\cap {\kappa}\in {\kappa}$ and $N_\nu\cup\{N_\nu\}\sse N$,

$N_\delta=\ds\bigcup_{\nu<\delta}N_\nu$, if $\delta$ is a limit
ordinal.

For each $\nu<{\kappa}$ set $\alpha_\nu=N_\nu\cap {\kappa}$. Using
the condensation lemma for $L[X]$, we obtain an ordinal
$\beta_\nu$ and an isomorphism $\sigma_\nu$ such that
\[\sigma_\nu:\lan N_\nu,\in,N_\nu\cap X\ran \simeq\lan L_{\beta_\nu}[X\cap\alpha_\nu],\in,X\cap\alpha_\nu\ran.\]
Then:
\begin{itemize}
\item $\alpha_\nu<\beta_\nu<\alpha_{\nu+1}$, \item
$\sigma_\nu({\kappa})=\alpha_{\nu}$, \item $\sigma_\nu(X)=X\cap
\alpha_{\nu}$, \item $\sigma_\nu\upharpoonright \alpha_{\nu}=id\upharpoonright \alpha_\nu$,
\item $L_{\beta_\nu} [X\cap \alpha_{\nu}]\models\ulcorner
\alpha_\nu$ is a regular cardinal, and $\alpha_\nu$ is the largest
cardinal $\urcorner$.
\end{itemize}
Let $t=\{\beta_\nu:\beta_\nu\not\in t_\nu\}$. Clearly $t\neq
t_\nu$ for all $\nu<{\kappa}$, and hence $t\not\in\mathcal{F}$.
Let $x\in[{\kappa}]^\l$ be such that:
\begin{itemize}
\item $t\cap x\not\in M_x$, \item $\alpha=sup(x)$ is minimal.
\end{itemize}
It follows that $t\cap x$ is cofinal in $\alpha$, and hence
$\alpha=\alpha_\eta$ for some $\eta<{\kappa}$. We have
\[t\cap x=\{\beta_\nu\in x: \beta_\nu<\alpha_\eta \ \ and \ \ \beta_\nu\not\in t_\nu\cap \alpha_\eta\}\]
and thus $t\cap x$ is definable from $x$, $\lan \beta_\nu:
\nu<\eta\ran$ and $\lan t_\nu\cap \alpha_\eta: \nu<\eta\ran$. It
is clear that:
\begin{itemize}
\item  $x\in M_x$, \item  $\lan \beta_\nu: \nu<\eta\ran$ is
definable in $L_{\beta_\eta}[X\cap\alpha_\eta]$. \item
$\sigma_\eta(C)=\lan t_\nu\cap\alpha_\eta:\nu<\eta\ran$, and hence
$\lan t_\nu\cap \alpha_\eta: \nu<\eta\ran$ is definable in
$L_{\beta_\eta} [X\cap \alpha_{\eta}]$.
\end{itemize}
Clearly $X\cap\alpha_\eta\in M_x$. We show that $\beta_\eta\in
M_x$. It will follow that $t\cap x\in M_x$ which is a
contradiction. The proof is in a sequence of claims. Let $M=M_x$.

\begin{claim} $\mathcal{P}(\alpha_\eta)\cap M\not\sse
L_{\beta_\eta}[X\cap \alpha_\eta]$.
\end{claim}
\begin{proof} Suppose not. Since $cf(\alpha_\eta)=cf(x) \leq \l
<\alpha_{\eta}$, there is $a\in M$ such that $a\sse\alpha_\eta$ is
cofinal in $\alpha_\eta$ and has order type less than
$\alpha_\eta$. Then $a\in L_{\beta_\eta}[X\cap\alpha_\eta]$, and
hence $\alpha_\eta$ is not a regular cardinal in
$L_{\beta_\eta}[X\cap\alpha_\eta]$. A contradiction.
\end{proof}

For $l<\nu<{\kappa}$ set:
\begin{itemize} \item
$\alpha^{(\nu)}=\lan\alpha_\iota: \iota\leq \nu\ran$, \item
$\beta^{(\nu)}=\lan\beta_\iota: \iota\leq \nu\ran$, \item
$\sigma_{\iota\nu}=\sigma_{\nu}\sigma_{\iota}^{-1}:\lan
L_{\beta_\iota}[X\cap\alpha_\iota],\in,X\cap\alpha_\iota\ran
\lora \lan
L_{\beta_\nu}[X\cap\alpha_\nu],\in,X\cap\alpha_\nu\ran$,
\item  $\sigma^{(\nu)}=\lan \sigma_{\iota\nu}:\iota<\tau\leq
\nu\ran$.
\end{itemize}

\begin{claim} $\nu\in M\cap \eta$ implies $\alpha^{(\nu)},
\beta^{(\nu)}, \sigma^{(\nu)}\in M$.
\end{claim}
\begin{proof} First note that $\alpha_{\nu} \in M$ implies
$\alpha^{(\nu)} \in M,$ since $\lan \alpha_{\iota}: \iota< \nu
\ran$ is definable from $L_{\beta_\nu}[X\cap\alpha_\nu]$ the way
$\lan\alpha_\iota: \iota< \kappa \ran$ was defined from
$L_{\kappa^+}[X].$ It follows that $\nu\in M\cap \eta$ implies
$\alpha^{(\nu)} \in M$, since there is $\tau, \nu \leq \tau <
\eta$ such that $\alpha_{\tau} \in M$ and $\alpha_{\nu}=
\alpha^{\tau}(\nu) \in M.$ By similar arguments $\nu\in M\cap
\eta$ implies $ \beta^{(\nu)}, \sigma^{(\nu)}\in M$.
\end{proof}

We note that
\[ \lan\lan L_{\beta_\iota}[X\cap\alpha_\iota],\in,X\cap\alpha_\iota\ran_{\iota<\eta},\lan \sigma_{\iota\nu}\ran_{\iota<\nu<\eta}\ran\]
is a directed system of elementary embeddings, and if
 \[ \lan \lan U,E,Y\ran, \lan g_\iota\ran_{\iota<\eta} \ran\]
 is its direct limit, then:
\begin{itemize}
\item $\lan U,E,Y\ran\simeq\lan
L_{\beta_\eta}[X\cap\alpha_\eta],\in,X\cap\alpha_\eta\ran$,
\item $g_\iota:\lan
L_{\beta_\iota}[X\cap\alpha_\iota],\in,X\cap\alpha_\iota\ran\lora\lan
U,E,Y\ran $, \item If $f:\lan U,E,Y\ran\simeq\lan
L_{\beta_\eta}[X\cap\alpha_\eta],\in,X\cap\alpha_\eta\ran$,
then $\sigma_{\iota\eta}=fg_\iota$.
\end{itemize}
Now let $\pi:\lan M,\in,M\cap X\ran\simeq\lan
L_\delta[\tilde{X}],\in,\tilde{X}\ran$, where
$\tilde{X}=\pi[M\cap X]$. Let

\begin{itemize}
\item $\tilde{\alpha}^{(\nu)}=\pi(\alpha^{(\nu)})$, \item
$\tilde{\beta}^{(\nu)}=\pi(\beta^{(\nu)})$, \item
$\tilde{\sigma}^{(\nu)}=\pi(\sigma^{(\nu)})$, \item
$\tilde{\alpha}=\ds\bigcup_{\nu\in M\cap \eta}
\tilde{\alpha}^{(\nu)}$, \item $\tilde{\beta}=\ds\bigcup_{\nu\in
M\cap \eta} \tilde{\beta}^{(\nu)}$,
\item $\tilde{\sigma}=\ds\bigcup_{\nu\in M\cap \eta} \tilde{\sigma}^{(\nu)}$,\\
\end{itemize}
and
\begin{itemize}
\item $\tilde{\alpha}_{\iota}=\pi(\alpha_{\pi^{-1}(\iota)})$,
\item $\tilde{\beta}_\iota=\pi(\beta_{\pi^{-1}(\iota)})$,
\item $\tilde{\sigma}_{\iota\nu}=\pi(\sigma_{\pi^{-1}(\iota),\pi^{-1}(\nu)})$.\\
\end{itemize}
Now
\[ \lan\lan L_{\tilde{\beta}_\iota}[\tilde{X}\cap\tilde{\alpha}_\iota],\in,\tilde{X}\cap\tilde{\alpha}_\iota\ran_{\iota<\pi(\eta)},
\lan \tilde{\sigma}_{\iota\nu}\ran_{\iota<\nu<\pi(\eta)}\ran\] is
a directed system of elementary embeddings. Let
 \[ \lan \lan \tilde{U},\tilde{E},\tilde{Y}\ran, \lan \tilde{g}_\iota\ran_{\iota<\pi(\eta)} \ran\]
be its direct limit. Then
\begin{itemize}
\item $\tilde{g}_\iota:\lan
L_{\tilde{\beta}_\iota}[\tilde{X}\cap\tilde{\alpha}_\iota],\in,\tilde{X}\cap\tilde{\alpha}_\iota\ran\lora\lan
\tilde{U},\tilde{E},\tilde{Y}\ran $, \item There is an elementary
embedding $h$ such that the following diagram is commutative
\[\begin{array}{ccc}
 \lan L_{\beta_{\pi^{-1}(\iota)}}[X\cap\alpha_{\pi^{-1}(\iota)}],\in,{X}\cap{\alpha}_{\pi^{-1}(\iota)}\ran & \stackrel{g_{\pi^{-1}(\iota)}}{\lora}
 & \lan U,E,Y\ran \\ \pi^{-1} \uparrow && \uparrow h \\
 \lan L_{\tilde{\beta}_\iota}[\tilde{X}\cap\tilde{\alpha}_\iota],\in,\tilde{X}\cap\tilde{\alpha}_\iota\ran & \stackrel{\tilde{g}_{\iota}}{\lora} & \lan \tilde{U},\tilde{E},\tilde{Y}\ran\end{array}\]
\end{itemize}
It follows that $\lan \tilde{U},\tilde{E}\ran$ is
well founded. Let
\[\tilde{f}:\lan \tilde{U},\tilde{E},\tilde{Y}\ran\simeq\lan L_{\bar{\beta}}[\bar{X}],\in,\bar{X}\ran.\]
Also let
\begin{itemize}
\item $\bar{\sigma}_\iota=\tilde{f}\tilde{g}_\iota:\lan
L_{\tilde{\beta}_\iota}[\tilde{X}\cap\tilde{\alpha}_\iota],\in,\tilde{X}\cap\tilde{\alpha}_\iota\ran\lora
\lan L_{\bar{\beta}}[\bar{X}],\in,\bar{X}\ran$, \item
$\pi^* = fh \tilde{f}^{-1}:\lan
L_{\bar{\beta}}[\bar{X}],\in,\bar{X}\ran\lora
\lan L_{\beta_\eta}[{X}\cap{\alpha}_\eta],\in,{X}\cap{\alpha}_\eta\ran$.\\
\end{itemize}
Then
$\tilde{\sigma}_{\iota\tau}=\bar{\sigma}_\tau^{-1}\bar{\sigma}_\iota$
for $\iota<\tau<\pi(\eta)$, and the following diagram is
commutative
\[\begin{array}{ccc}
 \lan L_{\beta_{\pi^{-1}(\iota)}}[X\cap\alpha_{\pi^{-1}(\iota)}],\in,{X}\cap{\alpha}_{\pi^{-1}(\iota)}\ran & \stackrel{\sigma_{\pi^{-1}(\iota),\eta}}{\lora} & \lan L_{{\beta}_\eta}[{X}\cap{\alpha}_\eta],\in,{X}\cap{\alpha}_\eta\ran
 \\ \pi^{-1} \uparrow && \uparrow \pi^* \\
 \lan L_{\tilde{\beta}_\iota}[\tilde{X}\cap\tilde{\alpha}_\iota],\in,\tilde{X}\cap\tilde{\alpha}_\iota\ran\ & \stackrel{\bar{\sigma}_{\iota}}{\lora} & \lan L_{\bar{\beta}}[\bar{X}],\in,\bar{X} \ran\end{array}\]

Let $\bar{\alpha}$ be such that
$L_{\bar{\beta}}[\bar{X}]\models\ulcorner\bar{\alpha}$ is the
largest cardinal $\urcorner$.

\begin{claim} $(a)$ $\pi(\alpha_\eta)=\bar{\alpha}$,

$(b)$ $\pi^*(\bar{\alpha})=\alpha_\eta$,

$(c)$ $\pi^* \upharpoonright  \bar{\alpha}=id \upharpoonright \bar{\alpha}$.
\end{claim}
\begin{proof} $(a)$ follows easily from the facts that
$\bar{\alpha}=sup_{\iota < \eta}\tilde{\alpha}_{\iota},$
$\alpha_{\eta}=sup_{\iota \in M \cap \eta}\alpha_{\iota}$ and
$\pi^{-1}(\tilde{\alpha}_{\iota})=\alpha_{\pi^{-1}(\iota)}.$ $(b)$
follows from the choice of $\bar{\alpha}$ and the elementarily of
$\pi^*$. $(c)$ is trivial, as $\bar{\alpha} \subseteq
L_{\bar{\beta}}[\bar{X}].$
\end{proof}

Next we have

\begin{claim} If $a\sse \bar{\alpha}$ and $a\in
L_{\bar{\beta}}[\bar{X}]\cap L_\delta[\tilde{X}]$, then
$\pi^*(a)=\pi^{-1}(a)$.
\end{claim}
\begin{proof} Since $a\sse\bar{\alpha}$, $\pi^*(a)$,
$\pi^{-1}(a)\sse\alpha_{\eta}$, and hence
$\pi^*(a)=\ds\bigcup_{\nu\in M\cap\eta} \pi^*(a)\cap \nu=
\ds\bigcup_{\nu< \pi(\eta)} \pi^*(a\cap \nu) \stackrel{claim 3.5}{=}
\ds\bigcup_{\nu<\pi(\eta)} \pi^{-1}(a\cap
\nu)=\pi^{-1}(a).$
\end{proof}

\begin{claim} $\delta>\bar{\beta}$.
\end{claim}
\begin{proof} Suppose not. Then $\delta\leq\bar{\beta}$ and
$\pi^*\pi$ maps $M$ into $L_{{\beta}_\eta}[{X}\cap{\alpha}_\eta]$,
and by claim 3.6, $\pi^*\pi(a)=a$ for $a\sse\alpha_\eta$, $a\in M$.
It follows that ${\mathcal{P}}(\alpha_\eta)\cap M\sse
L_{{\beta}_\eta}[{X}\cap{\alpha}_\eta]$, which is in contradiction
with claim 3.3.
\end{proof}

It follows that $\bar{\beta}\in L_\delta[\tilde{X}]$ and hence
$\tilde{\beta}=\lan \tilde{\beta}_{\iota}: \iota < \pi(\eta) \ran
\in L_\delta[\tilde{X}],$ since $\tilde{\beta}$ is definable from
$L_{\bar{\beta}}[\bar{X}]$ as $\lan \tilde{\beta}_{\iota}: \iota <
\kappa \ran$ was defined from $L_{\kappa^+}[X]$. Similarly
$\tilde{\sigma}=\lan \tilde{\sigma}_{\iota, \nu}: \iota<\nu<
\pi(\eta) \ran \in L_\delta[\tilde{X}]$. It is easily seen that

\begin{claim}$(a)$   $\pi^{-1}(\tilde{\alpha})=\lan \alpha_\iota
: \iota<\eta\ran$,

$(b)$  $\pi^{-1}(\tilde{\beta})=\lan \beta_\iota :
\iota<\eta\ran$,

$(c)$  $\pi^{-1}(\tilde{\sigma})=\lan \sigma_{\iota \nu}:
\iota<\nu<\eta\ran$.\hfill$\Box$
\end{claim}
Now note that:
\begin{itemize}
\item $L_{\bar{\beta}}[\bar{X}]$ is the direct limit of
$L_{\tilde{\beta}_\iota}[\tilde{X}\cap\tilde{\alpha}_\iota]$,
$\tilde{\sigma}_{\iota\nu}, \iota<\nu<\pi(\eta)$, \item
$\pi^{-1}[\bar{X}]=X\cap \alpha_\eta$,
\item $\pi^{-1}[\tilde{X}\cap\tilde{\alpha}_\iota]=X\cap \alpha_\iota$,\\
\end{itemize}
and hence by elementarily of $\pi^{-1}$,
$L_{\pi^{-1}(\bar{\beta})}[X\cap \alpha_\eta]$ is the direct limit
of $L_{{\beta}_\iota}[X\cap \alpha_\iota]$, ${\sigma}_{\iota\nu},
\iota<\nu<\eta$.

It follows that $\pi^{-1}(\bar{\beta})=\beta_\eta\in M$. We are
done.
\end{proof}

\section{Open problems}

We close the paper with some remarks and open problems.

By the results of Vaught, Chang, Jensen (see [1], Chapter VIII)
and Silver (see [7]), it is consistent, relative to the existence
of an inaccessible cardinal, to have the Gap$-n-$transfer
principle with the failure of the gap$-(n+1)-$transfer principle
for $n=1$. The answer is unknown for $n>1$.

\begin{question} Let $n>1.$ Is it consistent to have the
Gap$-n-$transfer principle with the failure of the
Gap$-(n+1)-$transfer principle?
\end{question}
Another related question is

\begin{question} Let $n>1.$ Is it consistent to have $(\kappa,
n)-$morasses for each uncountable regular $\kappa$, but no
$(\omega_1, n+1)-$morasses?
\end{question}
\begin{remark} Assuming the existence of large cardinals, it is
possible to build a model of set theory in which there exists a
$(\kappa, 1)-$morass for each uncountable regular $\kappa$, but
there are no $(\omega_1, 2)-$morasses.
\end{remark}
In the literature the canonical counter-example to the
Gap$-1-$transfer principle is the non-existence of Special
Aronszajn trees (see [5]). T. Raesch, in his dissertation (see
[6]), showed that this principle can fail in the presence of such
trees. On the other hand the canonical counter-example to the
Gap$-2-$transfer principle is the non-existence of Kurepa trees
(see [7]). Inspired by the work of Raesch, Jensen produced,
relative to the existence of a Mahlo cardinal, a model in which
the Gap$-2-$transfer principle fails, while the Gap$-1-$Kurepa
hypothesis holds (see [4]). However the following is open.

\begin{question} Is it consistent relative to an inaccessible
cardinal to have the Gap$-1-$Kurepa Hypothesis but a failure of
the Gap $-2-$transfer principle?
\end{question}
\begin{remark} It is possible to show that the existence of an
$(\omega_2, 1)-$morasses implies $\KH(\aleph_2, <\aleph_2).$ Thus
in our model, for $n=2,$ the Gap$-1-$Kurepa hypothesis holds,
while in it there are no $(\omega_2, 1)-$morasses.
\end{remark}
\begin{question} Let $n>1.$ Is it consistent with $GCH$ to
have $\KH(\aleph_n, \aleph_0)$ but not $\KH(\aleph_n, \aleph_1)$?
\end{question}
\begin{question} Let $n >1$. Is it consistent with $GCH$ to
have $\KH(\aleph_n, \aleph_i)$ for all $i<n,$ but not $\KH(\aleph_n,
<\aleph_n)?$
\end{question}

{Kurt G\"{o}del Research Center, University of Vienna,

E-mail address: sdf@logic.univie.ac.at}

{Department of Mathematics, Shahid Bahonar University of Kerman,
Kerman-Iran and School of Mathematics, Institute for Research in
Fundamental Sciences (IPM), Tehran-Iran.

E-mail address: golshani.m@gmail.com}

\end{document}